\journal{Journal}
  \def\clap#1{\hbox to 0pt{\hss#1\hss}}
\providecommand{\mat}[1]{\bm{#1}}%
\renewcommand{\vec}[1]{\mathbf{#1}}
\providecommand{\mA}{\ensuremath{\mat{A}}}
\providecommand{\mC}{\ensuremath{\mat{C}}}
\providecommand{\mD}{\ensuremath{\mat{D}}}
\providecommand{\mH}{\ensuremath{\mat{H}}}
\providecommand{\mQ}{\ensuremath{\mat{Q}}}
\providecommand{\mU}{\ensuremath{\mat{U}}}
\providecommand{\mV}{\ensuremath{\mat{V}}}
\providecommand{\mW}{\ensuremath{\mat{W}}}
\providecommand{\mZ}{\ensuremath{\mat{Z}}}
\providecommand{\va}{\ensuremath{\vec{a}}}
\providecommand{\ve}{\ensuremath{\vec{e}}}
\providecommand{\vq}{\ensuremath{\vec{q}}}
\providecommand{\vu}{\ensuremath{\vec{u}}}
\providecommand{\vv}{\ensuremath{\vec{v}}}
\providecommand{\vw}{\ensuremath{\vec{w}}}
\providecommand{\vx}{\ensuremath{\vec{x}}}
\providecommand{\vy}{\ensuremath{\vec{y}}}
\providecommand{\vz}{\ensuremath{\vec{z}}}
\newcommand{\hLambda}{\hat{\Lambda}}
\newcommand{\hmU}{\hat{\mU}}
\newcommand{\sQ}{\mathcal{Q}}
\newcommand{\bpi}{\bm{\pi}}
\newcommand{\bgamma}{\bm{\gamma}}
\newcommand{\bdelta}{\bm{\delta}}
\newcommand{\bmat}[1]{\begin{bmatrix}#1\end{bmatrix}}
\newcommand{\unit}[1]{\text{#1}}
\newcommand{\Ren}{\text{Re}}
\newtheorem{lemma}{Lemma}
\newtheorem{theorem}{Theorem}
\begin{document}

\begin{frontmatter}

\title{Data-driven dimensional analysis: algorithms for unique and relevant dimensionless groups}

%% Group authors per affiliation:
%\author{Elsevier\fnref{myfootnote}}
%\address{Radarweg 29, Amsterdam}
%\fntext[myfootnote]{Since 1880.}

%% or include affiliations in footnotes:
%\author[mymainaddress,mysecondaryaddress]{Elsevier Inc}
%\ead[url]{www.elsevier.com}

%\author[mysecondaryaddress]{Global Customer Service\corref{mycorrespondingauthor}}
%\cortext[mycorrespondingauthor]{Corresponding author}
%\ead{support@elsevier.com}

%\address[mymainaddress]{1600 John F Kennedy Boulevard, Philadelphia}
%\address[mysecondaryaddress]{360 Park Avenue South, New York}

\author[CSM]{Paul G.~Constantine}
\address[CSM]{Department of Computer Science, University of Colorado, Boulder, CO} \ead{paul.constantine@colorado.edu}

\author[SU1]{Zachary del Rosario} 
\address[SU1]{Department of Aeronautics and Astronautics, Stanford University, Durand Building, 496 Lomita Mall, Stanford, CA 94305}

\author[SU2]{Gianluca Iaccarino}
\address[SU2]{Department of Mechanical Engineering and Institute for Computational Mathematical Engineering, Stanford University, Building 500, Stanford, CA 94305}

\begin{abstract}
Classical dimensional analysis has two limitations: (i) the computed dimensionless groups are not unique, and (ii) the analysis does not measure relative importance of the dimensionless groups. We propose two algorithms for estimating unique and relevant dimensionless groups assuming the experimenter can control the system's independent variables and evaluate the corresponding dependent variable; e.g., computer experiments provide such a setting. The first algorithm is based on a response surface constructed from a set of experiments. The second algorithm uses many experiments to estimate finite differences over a range of the independent variables. Both algorithms are \emph{semi-empirical} because they use experimental data to complement the dimensional analysis. We derive the algorithms by combining classical semi-empirical modeling with active subspaces, which---given a probability density on the independent variables---yield unique and relevant dimensionless groups. The connection between active subspaces and dimensional analysis also reveals that all empirical models are \emph{ridge functions}, which are functions that are constant along low-dimensional subspaces in its domain. We demonstrate the proposed algorithms on the well-studied example of viscous pipe flow---both turbulent and laminar cases. The results include a new set of two dimensionless groups for turbulent pipe flow that are ordered by relevance to the system; the precise notion of \emph{relevance} is closely tied to the derivative based global sensitivity metric from Sobol' and Kucherenko~\cite{Sobol2009}.
\end{abstract}

\begin{keyword}
active subspaces \sep dimensional analysis \sep dimension reduction \sep semi-empirical modeling
\end{keyword}

\end{frontmatter}

\section{Introduction}

\noindent
Dimensional analysis yields insights into a physical system through careful examination of the system's units. The principle underlying dimensional analysis is that the relationships between physical quantities do not change if the measurement units are changed. For example, the relationship between the speed at which an object falls toward the ground and the height from which it was dropped does not depend on whether the height was measured in feet or meters. Palmer states, ``The units themselves are a human construct, but the fundamental relationships in the physical world have nothing whatsoever to do with human beings''~\cite{Palmer2008}. The goal of dimensional analysis is to identify a set of \emph{unitless} variables---often called \emph{dimensionless groups}~\cite{Palmer2008}---that constitute the essential, scale-free (i.e., dimensionless) physical relationship; the Reynolds number is a well-known example of a dimensionless group that arises in several scale-free relationships in fluid dynamics. The lack of scale in the relationship implies that results from simpler small-scale experiments can reveal large-scale phenomena. The celebrated Buckingham Pi Theorem (see, e.g.,~\cite[Chapter 1.2]{Barenblatt1996},~\cite[Chapter 4]{calvetti2013}, or~\cite[Chapter 4]{Palmer2008})---the essential theoretical result in dimensional analysis---provides the precise number of dimensionless groups given the system's quantities' units expressed in a set of base units (e.g., the seven SI units~\cite{NIST2008}).

Dimensional analysis has several well-known limitations. The Buckingham Pi Theorem does not construct unique dimensionless groups---only a linear subspace of exponents that produces dimensionless groups; any basis for the subspace is equally valid. Nor can the Pi Theorem identify which of the dimensionless groups, given a particular choice of basis, are most important in the physical relationship---i.e., what are the relevant and driving parameters of the scale-free system. Moreover, dimensional analysis provides no information on the mathematical form of the scale-free relationship; it only identifies which parameters constitute the relationship. Despite these limitations, dimensional analysis has some prominent successes, such as Kolmogorov's theory of turbulence~\cite{Kolmogorov1991}.

A common approach to establishing the mathematical form of the scale-free physical relationship is to combine the results of dimensional analysis---i.e., identifying dimensionless groups---with experimental measurements of the physical system; see, e.g., the famous account of Taylor estimating a nuclear explosion's yield from a photograph of the blast~\cite{Taylor1950}. The measurements are transformed into evaluations of the dimensionless groups, which become \emph{data} to fit a response surface. This approach is often called \emph{semi-empirical modeling}~\cite[Chapter 4]{calvetti2013}, and it ensures that any data-driven model will satisfy the principle of \emph{dimensional homogeneity}~\cite[Chapter 1]{Barenblatt1996}, i.e., that the mathematical form only sums quantities with the same dimension. Semi-empirical modeling is appropriate in physical systems since physical measurements are assumed to obey physical laws, in contrast to purely empirical modeling of nonphysical systems (e.g., social sciences), where such laws may not be justified. One major advantage of semi-empirical modeling over empirical modeling (i.e., curve fitting from the original experimental measurements that have units) is that the dimensional analysis often yields fewer independent, unitless variables than the original measured quantities. Thus, the experimenter needs fewer potentially expensive experiments to fit the response surface with the dimensionless groups; this is a type of \emph{dimension reduction}.

At this point, we must be careful with the nomenclature. A \emph{unit} is a unit of measurement such as a meter or a foot. In the context of dimensional analysis, a unit is a type of \emph{dimension}; for example, both meter and foot are types of the dimension \emph{length}. A quantity is \emph{unitless} or \emph{dimensionless} (we use the two interchangeably) if it does not have units; by construction, a dimensionless group such as the Reynolds number is dimensionless. However, \emph{dimension} can also refer to the number of independent coordinates that define a vector space; for example, a function $f:\mathbb{R}^m\rightarrow\mathbb{R}$ maps a point in the $m$-dimensional vector space $\mathbb{R}^m$ to the one-dimensional vector space $\mathbb{R}$. In this context, we refer to the number of independent inputs to a function as the dimension of its input space. And \emph{dimension reduction} means replacing or approximating a function $f:\mathbb{R}^m\rightarrow\mathbb{R}$ by some other function $g:\mathbb{R}^n\rightarrow\mathbb{R}$, where $n<m$. Our notion of dimension reduction is distinct from seeking a low-dimensional manifold that describes a given data set of high-dimensional vectors---as in principal component analysis~\cite{Jolliffe2002} or nonlinear dimensionality reduction~\cite{Lee2007}.

We propose a data-driven (i.e., semi-empirical) approach to identify \emph{unique} and \emph{relevant} dimensionless groups in a physical system. We consider an idealized physical system with $m+1$ measured quantities with units, where we distinguish between $m$ independent variables and one dependent variable. We call this situation \emph{idealized} for two reasons. First, choosing independent and dependent variables for a physical experiment is challenging and time-consuming in practice. Although our approach cannot inform this choice for a particular experiment, the approach does provide a principled strategy to identify simplified relationships for any choice of independent and dependent variables. Second, we assume that (i) the experiment can be run for arbitrary values of the independent variables with some connected set (e.g., a  range for each independent variable) and (ii) the associated dependent variable (i.e., the result of the experiment) is sufficiently accurate. In practice, physical constraints restrict the experimenter from assessing any arbitrary point the space of independent variables---e.g., a pipe whose length differs from specifications by something less than the machining precision. Moreover, in many physical experiments, there is often significant noise---and the noise may vary with different choices of the independent variables. The idealizations allow us to construct and analyze algorithms for the data-driven dimensional analysis. The algorithms are most appropriate in the context of \emph{computer experiments}~\cite{Sacks1989,santner2003}, where each experiment executes a computer simulation instead of a physical experiment; our numerical examples in Section \ref{sec:example} employ a computer model of viscous pipe flow. In experimental practice, the experimenter should assess the degree to which a given experimental setup satisfies the idealizations assumed in our algorithms.

Given the idealized system, we first use the Buckingham Pi Theorem to construct a basis for the exponents of the possible dimensionless groups. We derive a corollary from the Pi Theorem that reveals the structure of any empirical model. In particular, every empirical model is a so-called \emph{ridge function}~\cite{pinkus2015}, which is a function of fewer than $m$ linear combinations of the independent variables; we review ridge functions and derive the corollary in Section \ref{sec:da}. The ridge function structure yields insights into the \emph{active subspaces} of the empirical model. Given a weight function on the independent variables, a function's active subspace is the span of a set of \emph{important} directions; perturbing the function's inputs along these directions changes the function's output more, on average, than perturbing the inputs orthogonally to the active subspace. In our context, the weight function on the independent variables quantifies the physical regime under consideration. We review active subspaces and show their connection to the empirical model's ridge function structure in Section \ref{sec:dr}.

The active subspace's notion of importance leads to a useful definition for \emph{relevance} among the dimensionless groups, and this definition is related to particular derivative-based global sensitivity metrics proposed by Sobol' and Kucherenko~\cite{Sobol2009}; we define \emph{relevance} and develop its connection to global sensitivity metrics in Section \ref{sec:derivations}. The active subspace-based definition of \emph{relevance} has a fortunate consequence: it constructs a unique\footnote{The derived dimensionless groups are unique in the sense that eigenvectors are unique---that is, up to a normalizing factor.} set of dimensionless groups---as long as the eigenvalues of the matrix whose eigenspaces define active subspaces are separated. In Section \ref{sec:algs} we provide two algorithms for (i) constructing the unique dimensionless groups and (ii) ranking them according to relevance. We demonstrate the approach on a simple example of viscous pipe flow in Section \ref{sec:example}. And we conclude in Section \ref{sec:conclusions} with summarizing commentary.

\section{All empirical models are ridge functions}
\label{sec:da}

\noindent
In 1969, Bridgman wrote, ``The principal use of dimensional analysis is to deduce from a study of the dimensions of the variables in any physical system certain necessary limitations on the form of any possible relationship between those variables''~\cite{Bridgman1969}. At the time, dimensional analysis was a mature set of tools, and it remains a staple of the science and engineering curriculum because of its ``great generality and mathematical simplicity''~\cite{Bridgman1969}. In this section, we make Bridgman's ``necessary limitations'' precise by connecting the Buckingham Pi Theorem to a particular low-dimensional structure found in ridge functions. Recent statistics literature has explored the importance of dimensional analysis for statistical analyses, e.g., design of experiments~\cite{Albrecth2013} and regression analysis~\cite{Shen2014}. These works implicitly exploit the low-dimensional structure in the physical relationships.

Today's data deluge motivates researchers across mathematics, statistics, and engineering to pursue exploitable low-dimensional descriptions of complex, high-dimensional systems. Computing advances empower certain structure-exploiting techniques to impact a wide array of important problems. Successes---e.g., compressed sensing in signal processing~\cite{Donoho2006,Candes2006}, neural networks in machine learning~\cite{neuralnets96}, and principal components in data analysis~\cite{Jolliffe2002}---abound. We review \emph{ridge functions}~\cite{pinkus2015}, which exhibit a particular type of low-dimensional structure, and we show how that structure manifests in empirical models. 

\begin{figure}[ht]
\centering
\subfloat[]{
\includegraphics[width=.43\textwidth]{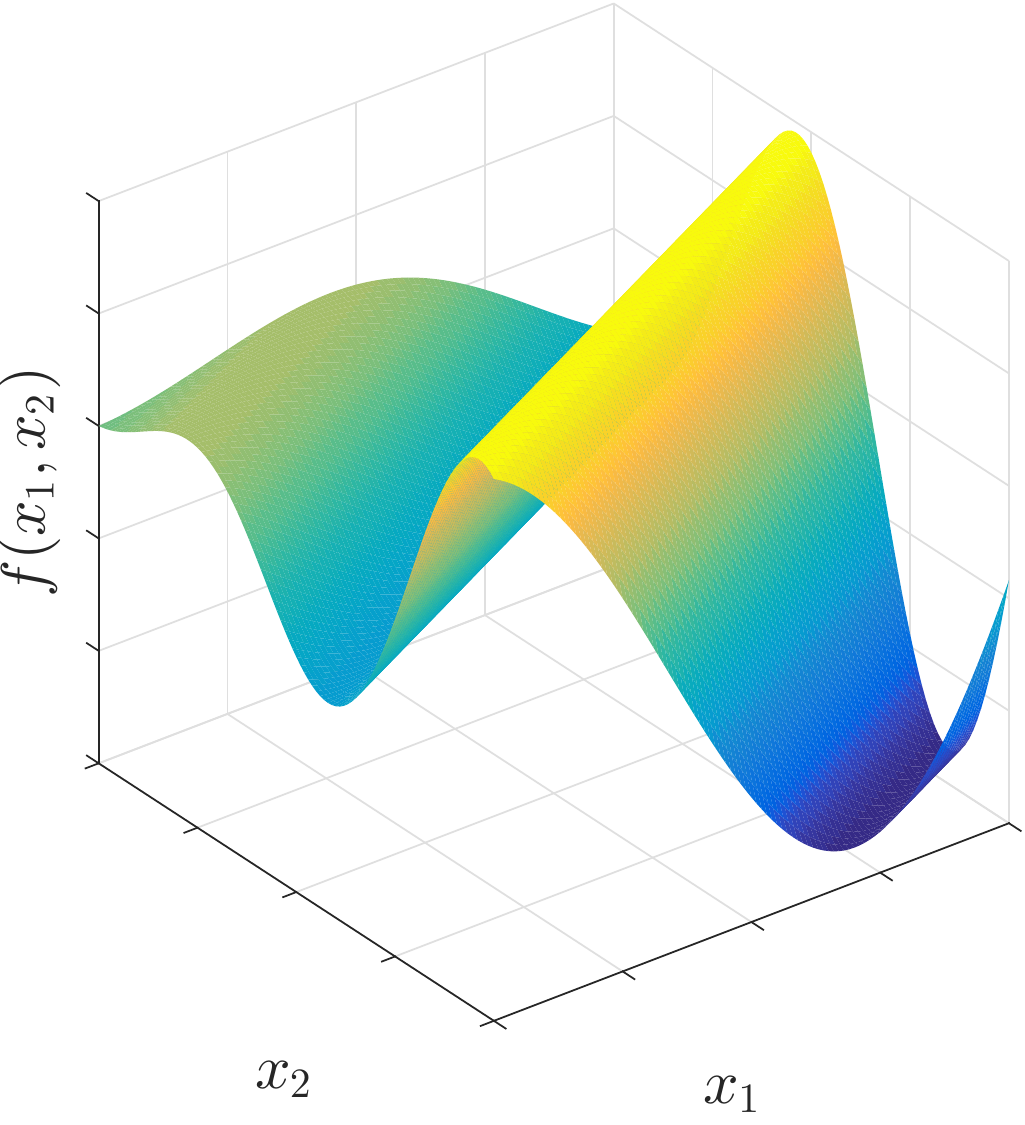}%
}
\subfloat[]{
\includegraphics[width=.43\textwidth]{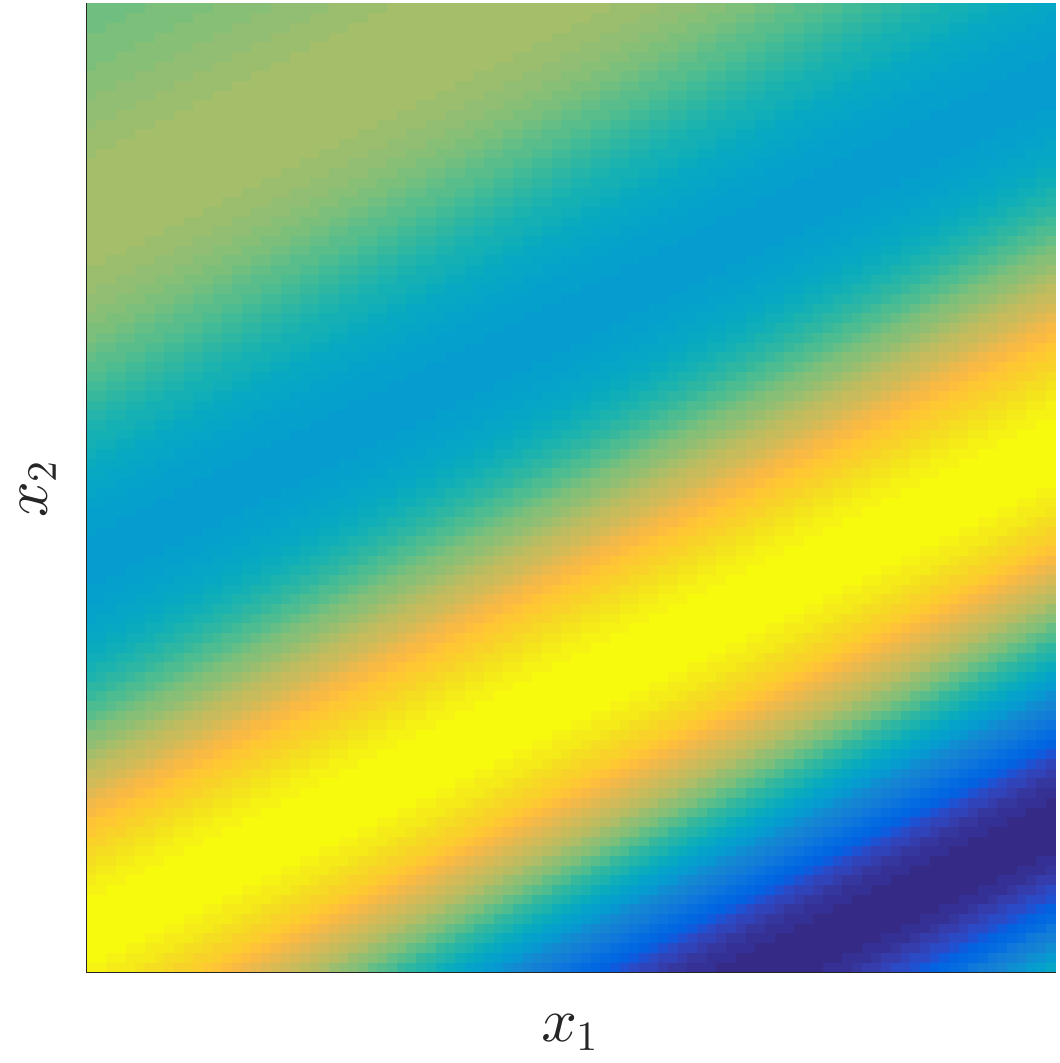}%
}
\caption{A ridge function [FIX ME]}
\label{fig:ridge}
\end{figure}

Let $\vx\in\mathbb{R}^m$ be a vector of independent, continuous variables; a ridge function $f:\mathbb{R}^m\rightarrow\mathbb{R}$ takes the form
\begin{equation}
\label{eq:ridgedef}
f(\vx) \;=\; g(\mA^T\vx),
\end{equation}
where $\mA\in\mathbb{R}^{m\times n}$ is a tall ($n<m$), full-rank matrix independent of $\vx$, and $g:\mathbb{R}^{n}\rightarrow\mathbb{R}$ is a scalar-valued function of $n$ variables. Although $f$ is nominally a function of $m$ variables, it is constant along all directions orthogonal to $\mA$'s columns. To see this, let $\vx\in\mathbb{R}^m$ and $\vy=\vx+\vu\in\mathbb{R}^m$ with $\vu$ orthogonal to $\mA$'s columns, i.e., $\mA^T\vu=0$. Then
\begin{equation}
\label{eq:constant}
f(\vy) 
\;=\; g(\mA^T(\vx + \vu)) 
\;=\; g(\mA^T\vx)
\;=\; f(\vx).
\end{equation}
Ridge functions appear in multivariate Fourier transforms, plane waves in partial differential equations, and statistical models such as projection pursuit regression and neural networks; see~\cite[Chapter 1]{pinkus2015} for a motivating introduction. Ridge functions are an object of study in approximation theory~\cite{Diaconis1984,Barron1993,pinkus2015}, and computational scientists have proposed methods for estimating their properties (e.g., the columns of $\mA$ and the form of $g$) from point evaluations $f(\vx)$~\cite{Fornasier2012,Cohen11,Tyagi2014}. However, scientists and engineers outside of mathematical sciences have paid less attention to ridge functions than other useful forms of low-dimensional structure. Many natural signals are sparse, and many real world data sets contain colinear factors. But whether ridge structures are pervasive in natural phenomena remains an open question. We answer this question affirmatively by showing that all empirical models are ridge functions as a consequence of dimensional analysis. To show this result, we first review classical dimensional analysis from a linear algebra perspective. 

\subsection{A linear algebra perspective on dimensional analysis}

\noindent
Several physics and engineering textbooks describe classical dimensional analysis. Barenblatt~\cite{Barenblatt1996} provides a thorough treatment in the context of scaling and self-similarity, while Ronin~\cite{sonin2001} is more concise. However, the presentation by Calvetti and Somersalo~\cite[Chapter 4]{calvetti15}, which ties dimensional analysis to linear algebra, is most appropriate for our purpose; what follows is similar to their treatment.  

We assume a chosen measurement system has $k$ base units---one for each of the $k$ dimensions. For example, if a mechanical system has $k=3$ dimensions of time $T$, length $L$, and mass $M$, then its base units may be seconds ($\unit{s}$), meters ($\unit{m}$), and kilograms ($\unit{kg}$), respectively. More generally, for a system in SI units~\cite{NIST2008}, $k$ is at most 7. All measured quantities in the system have units that are products of powers of the base units; for example, velocity has dimension length per time $(L/T)$ with units $\unit{m}\cdot\unit{s}^{-1}$. 

Define the \emph{dimension function} of a quantity $q$, denoted $[q]$, to be a function that returns the \emph{dimension} of $q$; e.g., if $q$ is velocity, then $[q]$ is $L/T$. If $q$ is dimensionless, then $[q]=1$. Define the \emph{dimension vector} of a quantity $q$, denoted $\vv(q)$, to be a function that returns the $k$ exponents of $[q]$ with respect to the dimensions of the $k$ base units; if $q$ is dimensionless, then $\vv(q)$ is a $k$-vector of zeros. For example, in a system with dimensions $T$, $L$, and $M$, if $q$ is velocity, then $[q]=L^1\cdot T^{-1}\cdot M^0$ and $\vv(q)=[1,-1,0]^T$; the order of the dimensions does not matter as long as they are consistent across quantities.

Barenblatt~\cite[Section 1.1.5]{Barenblatt1996} states that quantities $q_1,\dots,q_m$ ``have \emph{independent dimensions} if none of these quantities has a dimension function that can be represented in terms of a product of powers of the dimensions of the remaining quantities.'' This is equivalent to linear independence of the associated dimension vectors $\{\vv(q_1),\dots,\vv(q_m)\}$. 
%If quantities $q_1,\dots,q_m$ have independent dimensions with respect to $k$ base units, then $m\leq k$. We say that the quantities' independent dimensions are \emph{complete} if $m=k$. For the typical case of $m\geq k$, 
We can express the exponents for derived units as a linear system of equations. Let $q_1,\dots,q_m$ contain quantities with units derived from $k$ base units, and $m\geq k$. Assume that a subset of $q_1,\dots,q_m$ of size $k$ has independent dimensions. Define the $k\times m$ matrix
\begin{equation}
\label{eq:D}
\mD \;=\; \bmat{\vv(q_1) &\cdots & \vv(q_m)}.
\end{equation}
By independence, $\mD$ has rank $k$. Let $p$ be a quantity with derived units $[p]$. Then $[p]$ can be written as products of powers of $[q_1],\dots,[q_m]$,
\begin{equation}
\label{eq:dimeq}
[p] \;=\; [q_1]^{w_1}\cdots[q_m]^{w_m}.
\end{equation}
The powers $w_1,\dots,w_m$ satisfy the linear system of equations
\begin{equation}
\label{eq:linsys}
\mD\,\vw = \vv(p), \qquad
\vw = \bmat{w_1\\ \vdots \\ w_m}. 
\end{equation}
Given the solution $\vw$ of \eqref{eq:linsys}, we can define a quantity $p'$ with the same units as $p$ (i.e., $[p]=[p']$) as
\begin{equation}
\label{eq:logtrans}
\begin{aligned}
p' &= q_1^{w_1}\cdots q_m^{w_m}\\
&= \exp\left(\log\left(
q_1^{w_1}\cdots q_m^{w_m}
\right)\right)\\
&= \exp\left(\sum_{i=1}^m
w_i\,\log(q_i)\right)\\
&= \exp\left(\vw^T\log(\vq)\right),
\end{aligned}
\end{equation}
where $\vq=[q_1,\dots,q_m]^T$, and the log of a vector returns the log of each component. There is some controversy over whether the logarithm of a physical quantity makes physical sense~\cite{molyneux91}. We sidestep this discussion by noting that $\exp(\vw^T\log(\vq))$ is merely a formal expression of products of powers of physical quantities. There is no need to interpret the units of the logarithm of a physical quantity. 

\subsection{Nondimensionalization}
\label{ssec:nondim}

\noindent
Assume we have an idealized physical system with $m+1$ quantities with units, $q$ and $\vq=[q_1,\dots,q_m]^T$, whose units are derived from a set of $k$ base units, and $m>k$. Without loss of generality, assume that $q$ is the quantity of interest (i.e., the dependent variable) with units $[q]$, and $[q]\not=1$---i.e., $q$ is not dimensionless. The remaining quantities $\vq$ are the independent variables. We assume that $\mD$, defined as in \eqref{eq:D}, has rank $k$, which is equivalent to assuming that there is a set of independent dimensions among $[q_1],\dots,[q_m]$. We construct a dimensionless independent variable $\pi$ as 
\begin{equation}
\label{eq:qoi}
\pi \;=\; \pi(q,\,\vq) \;=\; q\,\exp(-\vw^T\log(\vq)),
\end{equation}
where the exponents $\vw$ satisfy the linear system 
\begin{equation}
\label{eq:qoiw}
\mD\vw \;=\; \vv(q). 
\end{equation}
In practice, as the analyst is assembling the system's quantities, the $\mD$ she constructs may not be full rank, which may indicate some missing quantities; the solution $\vw$ to \eqref{eq:qoiw} is not guaranteed to exist when $\mD$ is not full rank. Our assumption that $\mD$ is full rank ignores this case. However, even under this assumption, the solution $\vw$ is not unique, since $\mD$ has a nontrivial null space (i.e., $m>k$ and $\text{rank}(\mD)=k$). For readibility, we speak of \emph{the} vector $\vw$; we mean \emph{any} vector that satisfies \eqref{eq:qoiw}.

Let $\mW=[\vw_1,\dots,\vw_n]\in\mathbb{R}^{m\times n}$ be a matrix whose columns contain a basis for the null space of $\mD$. In other words,
\begin{equation}
\label{eq:null0}
\mD\mW \;=\; \mathbf{0}_{k\times n},
\end{equation}
where $\mathbf{0}_{k\times n}$ is an $k$-by-$n$ matrix of zeros. Note that $n=m-k$, since $\text{rank}(\mD)=k$. Without loss of generality, we assume that $\mW$ has orthogonal columns, which is convenient for the linear algebra. The basis for the null space is not unique. To see this, define $\mV = \mW\mQ$, where $\mQ\in\mathbb{R}^{n\times n}$ is an orthogonal matrix, and note
\begin{equation}
\label{eq:null}
\mD\mV \;=\;\underbrace{\mD\mW}_{=\;\mathbf{0}}\mQ
\;=\; \mathbf{0}_{k\times n}.
\end{equation}
The basis' nonuniqueness represents a challenge in classical dimensional analysis. 
Calvetti and Somersalo~\cite[Chapter 4]{calvetti2013} offer a recipe for computing $\mW$ with rational elements via Gaussian elimination, which produces basis vectors with rational elements; this is consistent with the physically intuitive construction of many classical dimensionless groups, such as the Reynolds number. However, one must still choose the pivot columns in the Gaussian elimination. For readibility, we speak of \emph{the} basis $\mW$; we mean \emph{any} matrix that satisfies \eqref{eq:null0}.

Each column of $\mW$ represents a dimensionless group. Similar to \eqref{eq:qoi}, we can formally express the $n$ dimensionless groups, each denoted $\pi_i$, as
\begin{equation}
\label{eq:dimless}
\pi_i \;=\; \pi_i(\vq) \;=\; \exp(\vw_i^T\log(\vq)),\qquad i=1,\dots,n.
\end{equation}
The dimensionless groups depend on the choice of basis vectors $\{\vw_i\}$, so they are not unique.

The Buckingham Pi Theorem~\cite[Chapter 1.2]{Barenblatt1996} states that any physical relationship between $q$ and $\vq$ can be expressed as a relationship between the dimensionless dependent variable $\pi$ and the $n=m-k$ dimensionless groups $\pi_1,\dots,\pi_n$.  We seek a function $f:\mathbb{R}^{n}\rightarrow\mathbb{R}$ that mathematically represents the relationship between $\pi$ and $\pi_1,\dots,\pi_n$,
\begin{equation}
\label{eq:phi}
\pi \;=\; f(\pi_1,\dots,\pi_n).
\end{equation}
Expressing the relationship in dimensionless quantities has several advantages. First, there are often fewer dimensionless quantities than the original measured quantities, which allows one to construct $f$ empirically with many fewer experiments than one would need to model a mathematical relationship among the measured quantities; several classical examples showcase this advantage~\cite[Chapter 1]{Barenblatt1996}. Second, dimensionless quantities do not change if units are scaled, which allows one to devise small-scale experiments that reveal a scale-invariant relationship. Third, when all quantities are dimensionless, any mathematical relationship will satisfy \emph{dimensional homogeneity}, which is a physical requirement that models only sum quantities with the same dimension. 

\subsection{Derivation of ``All empirical models are ridge functions''}
\label{ssec:deriv}

\noindent
We exploit the nondimensionalized physical relationship \eqref{eq:phi} to show that the corresponding relationship among the original measured quantities can be modeled with a ridge function. To see this, we combine \eqref{eq:qoi},  \eqref{eq:dimless}, and \eqref{eq:phi} as follows:
\begin{equation}
\label{eq:rd0}
\begin{aligned}
q\,\exp(-\vw^T\log(\vq))
&= \pi \\
&= f(\pi_1,\dots,\pi_n)\\
&= f\left(
\exp(\vw_1^T\log(\vq)),\dots,\exp(\vw_n^T\log(\vq))
\right).
\end{aligned}
\end{equation}
We rewrite \eqref{eq:rd0} to emphasize the dependent variable $q$ as a function of the independent variables $\vq$, 
\begin{equation}
\label{eq:rd3}
q \;=\; \exp(\vw^T\log(\vq))\,\cdot\,f\left(
\exp(\vw_1^T\log(\vq)),\dots,\exp(\vw_n^T\log(\vq))
\right).
\end{equation}
We define $\vx=\log(\vq)$; that is, the vector $\vx\in\mathbb{R}^m$ contains the logarithms of the independent variables $\vq$. Then,
\begin{equation}
\label{eq:rd4}
\begin{aligned}
q &= \exp(\vw^T\vx)\,\cdot\,f\left(
\exp(\vw_1^T\vx),\dots,\exp(\vw_n^T\vx)
\right)\\
&= h(\vw^T\vx,\vw_1^T\vx,\dots,\vw_n^T\vx)\\
&= h(\mA^T\vx),
\end{aligned}
\end{equation}
where $h:\mathbb{R}^{n+1}\rightarrow\mathbb{R}$ and $\mA\in\mathbb{R}^{m\times (n+1)}$. The matrix $\mA$ contains the vectors computed in \eqref{eq:qoi} and \eqref{eq:dimless},
\begin{equation}
\label{eq:A}
\mA \;=\; 
\bmat{\vw & \mW} \;=\;
\bmat{\vw&\vw_1&\cdots &\vw_n}.
\end{equation}
The form \eqref{eq:rd4} is a ridge function in $\vx$, which justifies our thesis; compare to \eqref{eq:ridgedef}. We call the column space of $\mA$ the \emph{dimensional analysis subspace}. 

Several remarks are in order. First, \eqref{eq:rd4} reveals $h$'s dependence on its first coordinate. If one tries to fit $h(y_0,y_1,\dots,y_n)$ from measured data (i.e., as in semi-empirical modeling), then she should pursue a function of the form $h(y_0,y_1,\dots,y_n)=\exp(y_0)\,g(y_1,\dots,y_n)$, where $g:\mathbb{R}^n\rightarrow\mathbb{R}$. In other words, the first input of $h$ only scales another function of the remaining variables. 

Second, writing the physical relationship as $q=h(\mA^T\vx)$ as in \eqref{eq:rd4} uses a ridge function of the logs of the physical quantities. In dimensional analysis, some contend that the log of a physical quantity is not physically meaningful. However, there is no issue taking the log of numbers for semi-empirical modeling; measured data is often plotted on a log scale, which is equivalent. To construct $h$ from measured data, one must compute the logs of the measured numbers; such fitting is a computational exercise that ignores the quantities' units. 

To estimate $q$ given $\vq$ with the fitted semi-empirical model, we evaluate
\begin{equation}
\begin{aligned}
q &= h(\vw^T\log(\vq),\vw_1^T\log(\vq),\dots,\vw_n^T\log(\vq))\\
&= \exp(\vw^T\log(\vq))\,\cdot\,g\big(\log(\exp(\vw_1^T\log(\vq))),\dots, \log(\exp(\vw_n^T\log(\vq)))\big)\\
&= \exp(\vw^T\log(\vq))\,\cdot\,g\big(\log(\pi_1),\dots,\log(\pi_n)\big)
\end{aligned}
\end{equation}
In $g\left(\log(\pi_1),\dots,\log(\pi_n)\right)$, the logs act on nondimensional quantities, and $g$ returns a nondimensional quantity. By construction, the term $\exp(\vw^T\log(\vq))$ has the same units as $q$, so dimensional homogeneity is satisfied. Thus, the ridge function form of the semi-empirical model \eqref{eq:rd4} does not somehow violate dimensional homogeneity. 

Third, the columns of $\mA$ are linearly independent by construction. The first column is not in the null space of $\mD$ (see \eqref{eq:qoi}), and the remaining columns form a basis for the null space of $\mD$ (see \eqref{eq:null0}). So $\mA$ has full column rank. 

Finally---and most importantly---the ridge function structure in the semi-empirical model \eqref{eq:rd4} is independent of the particular response surface methodology used to construct $h$. In other words, the ridge function structure is fundamental to \emph{any} empirical model as a consequence of the dimensional analysis. The dependent variable $q$ is invariant to changes in the log-transformed independent variables $\vx=\log(\vq)$ that live in the null space of $\mA^T$; see \eqref{eq:constant}. Any attempt at empirical modeling that ignores this fundamental structure will miss valuable cost savings and exploitable insights. 

\subsection{Response surface methodologies}

\noindent
Since the ridge function structure in the generic semi-empirical model \eqref{eq:rd4} is independent of the choice of response surface, our analysis cannot help choose an appropriate response surface methodology for a particular application. There is an extensive literature on response surface methodolgies~\cite{Myers1995} and the requisite design of experiments~\cite{dean2017,santner2003}. Specific techniques include splines~\cite{Wahba1990} and radial basis functions~\cite{Wendland2004}. Jones provides a taxonomy of response surfaces~\cite{Jones2001}, and Shan and Wang survey available techniques for high-dimensional surfaces~\cite{Shan2009}; both reviews focus on the related context of optimization. Statistical tools for response surface modeling include regression surfaces~\cite{Hastie2009} and Gaussian processes~\cite{gpml2006} (also known as kriging surfaces~\cite{Stein1999}). The statistics-based tools include prediction variances derived from assumptions on noise in the data, and these prediction variances can be useful for designing experiments to improve the response surface's quality. In the related field of uncertainty quantification~\cite{Smith2013,Sullivan2015}, response surfaces based on polynomials go by the name \emph{polynomial chaos}~\cite{LeMaitre2010}.

\section{Active subspaces}
\label{sec:dr}

\noindent
The ridge function structure of the semi-empirical model \eqref{eq:rd4} implies that there are directions in the log-transformed space of the independent variables that do not change the dependent variable. This type of structure is related to the \emph{active subspaces} in a function of several variables. In this section, we review active subspaces and show their connection to the dimensional analysis subspace, i.e., the column space of $\mA$ from \eqref{eq:A}. 

The active subspaces~\cite{asbook,constantine2014active} of a given function are defined by sets of important directions in the function's domain. More precisely, let $\rho:\mathbb{R}^m\rightarrow\mathbb{R}_+$ be a bounded probability density function, and let $f:\mathbb{R}^m\rightarrow\mathbb{R}$ be a continuous, differentiable function with continuous and square-integrable (with respect to $\rho(\vx)$) partial derivatives. Define the $m\times m$ symmetric and positive semidefinite matrix $\mC$ as
\begin{equation}
\label{eq:Cmat}
\mC \;=\; \int \nabla f(\vx)\,\nabla f(\vx)^T\,\rho(\vx)\,d\vx,
\end{equation}
where $\nabla f(\vx)\in\mathbb{R}^m$ is the gradient of $f$. This matrix is the expected value of the rank-1-matrix-valued functional $\nabla f(\vx)\nabla f(\vx)^T$, where $\vx$ is a random vector distributed according to the joint density $\rho(\vx)$. Samarov~\cite{Samarov1993} studied a related matrix as one of several \emph{average derivative functionals} in the context of statistical regression, where $f$ is the link function in the regression model. In the same regression context, Hristache et al.~\cite{Hristache2001} uses a similar matrix to reduce the predictor dimension. See Cook~\cite{cook2009regression} for a comprehensive review of related \emph{sufficient dimension reduction} techniques for regression. Our context differs from regression, since there is no random noise in the dependent variable that is independent of $\vx$. 

The matrix $\mC$ admits a real eigenvalue decomposition $\mC=\mU\Lambda\mU^T$, where $\mU = \bmat{\vu_1 & \cdots & \vu_m}$ is the orthogonal matrix of eigenvectors, and $\Lambda$ is the diagonal matrix of non-negative eigenvalues denoted $\lambda_1,\dots,\lambda_m$ and ordered from largest to smallest. Writing the Rayleigh quotient form of the eigenvalue reveals (see Lemma 2.1 in~\cite{constantine2014active}),
\begin{equation}
\lambda_i \;=\; \int \big( \vu_i^T\nabla f(\vx)\big)^2\,\rho(\vx)\,d\vx.
\end{equation}
Assuming the $i$th eigenvalue $\lambda_i$ is unique, it measures the average, squared directional derivative of $f$ along the corresponding eigenvector $\vu_i$. In other words, the eigenvectors provide a set of orthogonal directions in the domain of $f$ that are ordered according to how perturbations in $\vx$ affect $f$. If $\lambda_i>\lambda_j$, then perturbing $\vx$ along $\vu_i$ changes $f$ more, on average, than perturbing $\vx$ along $\vu_j$. 

Assume that $\lambda_k>\lambda_{k+1}$ for some $k<m$ (i.e., $\lambda_k$ is \emph{strictly} greater than $\lambda_{k+1}$). Then the active subspace of dimension $k$ for the function $f$ is the span of the first $k$ eigenvectors; in short, active subspaces are eigenspaces of $\mC$ from \eqref{eq:Cmat}. Constantine developed computational procedures for (i) estimating the active subspace and (ii) exploiting it to enable calculations that are otherwise prohibitively expensive when the number $m$ of components in $\vx$ is large---e.g., approximation, optimization, and integration~\cite{asbook}. 

\subsection{Connections to ridge functions}

\noindent
A vector $\vu$ is in the null space of $\mC$ (i.e., $\mC\vu=0$) if and only if $f$ is constant along $\vu$; see Theorem 1 in~\cite{constantine2016near}. Thus, if $\mC$ is rank deficient, then $f(\vx)$ is a ridge function; see \eqref{eq:constant}. And if $f$ is a ridge function as in \eqref{eq:ridgedef}, then $f$'s active subspace is related to the $m\times n$ matrix $\mA$. First, observe that $\nabla f(\vx) = \mA\,\nabla g(\mA^T\vx)$, where $\nabla g$ is the gradient of $g$ with respect to its arguments. Then 
\begin{equation}
\label{eq:Cridge}
\mC \;=\; \mA\mH\mA^T,
\end{equation}
where
\begin{equation}
\label{eq:Hmat}
\mH\;=\;
\int \nabla g(\mA^T\vx)\,\nabla g(\mA^T\vx)^T\,\rho(\vx)\,d\vx.
\end{equation}
The symmetric positive semidefinite matrix $\mH$ has size $n\times n$. The form of $\mC$ in \eqref{eq:Cridge} implies two facts, which we state as lemmas.

\begin{lemma}
\label{lem:Crank}
Let $r = \text{rank}\,(\mH)$. Then for $\mC$ from \eqref{eq:Cridge}, $\text{rank}\,(\mC)\leq r$.
\end{lemma}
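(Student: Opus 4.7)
The plan is to prove this by a direct application of the standard submultiplicativity of matrix rank, namely that $\text{rank}(XY) \le \min(\text{rank}(X), \text{rank}(Y))$ for any conformable matrices $X$ and $Y$.

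First, I would observe that $\mC$ is expressed as a triple product $\mC = \mA \mH \mA^T$ from \eqref{eq:Cridge}. Treating $\mA \mH$ as a single factor and $\mA^T$ as the other, submultiplicativity gives
\begin{equation}
\text{rank}(\mC) \;=\; \text{rank}(\mA\mH\mA^T) \;\le\; \text{rank}(\mA\mH).
\end{equation}
Next, applying the same inequality to the product $\mA\mH$ yields $\text{rank}(\mA\mH) \le \text{rank}(\mH) = r$. Chaining the two inequalities delivers the desired bound $\text{rank}(\mC) \le r$.

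An equivalent way to phrase the argument, which I might include for clarity, is via column spaces: the range of $\mA\mH\mA^T$ is contained in the range of $\mA\mH$, whose dimension is at most $\text{rank}(\mH) = r$. Either formulation requires only elementary linear algebra and does not use the specific integral form \eqref{eq:Hmat} of $\mH$ or any property of the probability density $\rho$; the lemma is purely an algebraic consequence of the factorization \eqref{eq:Cridge}.

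There is no real obstacle here; the result is a one-line consequence of standard rank inequalities. The only subtlety worth flagging is that $\mA$ has full column rank $n+1$ (as noted in the third remark following \eqref{eq:rd4}), so the bound is tight whenever $r \le n+1$, i.e.\ whenever $\mH$ is not of larger rank than $\mA$ admits—but this observation is not needed for the proof of the stated inequality.
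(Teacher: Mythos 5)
Your proof is correct and follows essentially the same route as the paper, which likewise invokes the standard rank inequality $\rank(\mA\mH\mA^T)\leq\min\{\rank(\mA),\rank(\mH)\}$ (citing Horn and Johnson) and concludes via $r\leq n$. The only cosmetic difference is that you chain two pairwise submultiplicativity steps rather than quoting the three-factor bound directly; the substance is identical.
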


\begin{proof}
By~\cite[Section 0.4.5]{Horn1985}, 
\begin{equation}
\text{rank}(\mC) 
= \text{rank}(\mA\mH\mA^T)
\leq \min\{\,\text{rank}(\mA),\,\text{rank}(\mH)\,\}
= \min\{\,n,\,r\,\} = r,
\end{equation}
since $r\leq n$.
\end{proof}

\begin{lemma}
\label{lem:spaces}
The eigenspaces of $\mC$ from \eqref{eq:Cridge} are subspaces of $\mA$'s column space.
\end{lemma}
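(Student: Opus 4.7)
The plan is to exploit the factorization $\mC=\mA\mH\mA^{T}$ directly rather than to work with the eigenvalue equation in a coordinate-free way. The one-line observation driving everything is that for any vector $\vy\in\mathbb{R}^{m}$,
\begin{equation*}
\mC\vy \;=\; \mA\big(\mH\mA^{T}\vy\big),
\end{equation*}
which is, by inspection, a linear combination of the columns of $\mA$. Hence $\mathrm{range}(\mC)\subseteq\mathrm{range}(\mA)$.

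From here I would invoke the spectral theorem for real symmetric matrices: $\mC$ has an orthonormal eigenbasis, and because $\mC$ is symmetric its range is the orthogonal complement of its null space, which in turn equals the span of those eigenvectors whose eigenvalues are nonzero. Thus any eigenspace of $\mC$ corresponding to a nonzero eigenvalue is contained in $\mathrm{range}(\mC)\subseteq\mathrm{range}(\mA)$. Equivalently and more concretely, if $(\lambda,\vu)$ is an eigenpair with $\lambda\neq 0$, then
\begin{equation*}
\vu \;=\; \tfrac{1}{\lambda}\mC\vu \;=\; \tfrac{1}{\lambda}\,\mA\big(\mH\mA^{T}\vu\big)\;\in\;\mathrm{range}(\mA),
\end{equation*}
which establishes the claim for every nonzero-eigenvalue eigenspace.

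The main obstacle is the zero eigenspace, which the statement as written appears to include. Any vector $\vu$ orthogonal to $\mathrm{range}(\mA)$ satisfies $\mA^{T}\vu=\mathbf{0}$ and hence $\mC\vu=\mathbf{0}$, so the null space of $\mC$ generically contains directions outside $\mathrm{range}(\mA)$ (especially when $\mH$ is rank deficient, per Lemma~\ref{lem:Crank}). In the proof I would therefore state the conclusion for eigenspaces corresponding to nonzero eigenvalues, which is what the active-subspace analysis actually uses: the important directions computed from $\mC$ are automatically expressible as linear combinations of the dimensional-analysis vectors $\vw,\vw_{1},\dots,\vw_{n}$. This is the content one needs to tie the eigenspaces of $\mC$ back to the dimensional analysis subspace defined in \eqref{eq:A}.
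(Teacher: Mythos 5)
Your proof is correct and follows essentially the same route as the paper's: both arguments reduce to the observation that an eigenvector with nonzero eigenvalue can be written as $\mC$ applied to some vector, and $\mC\vy=\mA(\mH\mA^T\vy)$ lands in $\mathrm{range}(\mA)$ (the paper does this through a chain $\vy=\mU_k\Lambda\Lambda^{-1}\va_0=\cdots=\mA\va_3$, yours through the cleaner $\vu=\lambda^{-1}\mC\vu$). Your explicit caveat about the zero eigenspace is well taken and is the sharper reading: the paper handles it only implicitly by restricting to $k\leq\rank(\mC)$, whereas the null space of $\mC$ does indeed contain the orthogonal complement of $\mathrm{range}(\mA)$ and is therefore not a subspace of $\mA$'s column space.
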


\begin{proof}
Let $r = \text{rank}(\mC)$, and let $\mU_k$ be a basis for the $k$-dimensional eigenspace of $\mC$ with $k\leq r$. Let $\vy\in\mathbb{R}^m$ be a vector in the $k$-dimensional eigenspace of $\mC$, i.e., $\vy=\mU_k\va_0$ for some $\va_0\in\mathbb{R}^k$. Let $\Lambda\in\mathbb{R}^{r\times r}$ be the diagonal matrix of non-zero eigenvalues of $\mC$. Then 
\begin{equation}
\vy = \mU_k \Lambda \Lambda^{-1} \va_0
= \mU_k \Lambda \va_1
= \mU_k \Lambda \mU_k^T \va_2
= \mC \va_2
= \mA\mH\mA^T \va_2
= \mA\va_3,
\end{equation}
where $\va_1 = \Lambda^{-1}\va_0$, $\va_2$ solves $\mU_k^T\va_2=\va_1$, and $\va_3=\mH\mA^T\va_2$.
\end{proof}

%Moreover, if $\mA$'s columns are orthogonal, then we can compute the first $n$ of $\mC$'s eigenpairs by computing $\mT$'s eigenpairs, which offers a computational advantage.

\subsection{A note on log transformations}
\label{ssec:logt}

\noindent
The derivation in Section \ref{ssec:deriv} shows that empirical models are ridge functions in the logarithms of the independent variables. Do Lemmas \ref{lem:Crank} and \ref{lem:spaces} still apply when the probability density $\rho$ is a function of the original variables but the ridge function is a function of the logarithms? The answer is yes, because the given density on the original variables induces a density on the logarithms---as long as (i) the range of the original variables is bounded and (ii) all values in the range are strictly positive; positivity can be ensured with an appropriate shift. Using the notation from Section \ref{sec:da}: let $\vq$ be the independent variables; let $\rho(\vq)$ be the probability density function on $\vq$, and let $\vx=\log(\vq)$. By the change-of-variables formula, the density on $\vx$, denoted $\sigma(\vx)$, given $\rho(\vx)$ is
\begin{equation}
\sigma(\vx) \;=\; \rho(\exp(\vx))\,\exp\left(\ve^T\vx\right),
\end{equation}
where $\ve$ is an $m$-vector of ones. Define the ridge function $f(\vq)=g(\mA^T\log(\vq))$. By a change of variables, the analog of \eqref{eq:Hmat} is
\begin{equation}
\int \nabla g(\mA^T\log(\vq))\,\nabla g(\mA^T\log(\vq))^T\,\rho(\vq)\,d\vq
\;=\;
\int \nabla g(\mA^T\vx)\,\nabla g(\mA^T\vx)^T\,\sigma(\vx)\,d\vx.
\end{equation}
In other words, Lemmas \ref{lem:Crank} and \ref{lem:spaces} still apply to active subspaces of a ridge function of the logarithms. The difference is in the density function. 

\subsection{Implications for semi-empirical modeling}

\noindent
Consider the idealized physical system defined by the dependent variable $q$ as a function of the log-transformed independent variables $\vx=\log(\vq)$. Assume that a chosen probability density function $\rho(\vx)$ quantifies the physical regime under consideration; for example, in Section \ref{sec:example} we study a pipe flow where chosen densities constrain the independent velocity, density, and viscosity such that the flow is either laminar or turbulent. Also, assume the function satisfies the smoothness conditions (i.e., continuity, differentiability) such that active subspaces are well defined.  

Since any empirical model is a ridge function (see \eqref{eq:rd4}), Lemma \ref{lem:Crank} implies that the number of dimensionless groups plus 1 is an upper bound on the dimension of this function's active subspaces. Moreover, Lemma \ref{lem:spaces} implies that any active subspace is a subspace of the dimensional analysis subspace (i.e., the column space of $\mA$ from \eqref{eq:A}). By computing a basis for the dimensional analysis subspace---e.g., with the Gaussian elimination-based approach~\cite[Chapter 4]{calvetti2013}---we obtain a subspace that contains all active subspaces without any gradient evaluations or numerical integration for estimating $\mC$ from \eqref{eq:Cmat}.

%Additionally, if the functional form is transformed so that $\mA$ has orthogonal columns (e.g., via a QR factorization), then one may estimate $q$'s active subspace with less effort by exploiting the connection to dimensional analysis. 

\section{Unique and relevant dimensionless groups}
\label{sec:derivations}

\noindent
Two well-known limitations of classical dimensional analysis are that (i) it does not produce unique dimensionless groups and (ii) it does not reveal the relative importance of the dimensionless groups in the physical relationship. We can address both of these concerns by studying active subspaces in the nondimenisonalized form of the physical relationship \eqref{eq:phi}. Our active subspace-based approach to the second limitation derives its notion of \emph{importance} from global sensitivity analysis.

\subsection{Sensitivity analysis}
\label{ssec:sensitivity}

\noindent
Global sensitivity analysis~\cite{saltelli2008global} seeks to measure the importance of each input variable for a function of several variables. Given a function $f(\vx)$, there are many metrics one may use to measure importance---each with its particular interpretation derived from its mathematical definition---including variance-based sensitivity indices (also known as Sobol' indices~\cite{SOBOL2001}), Morris' elementary effects~\cite{morris1991}, standardized regression coefficients~\cite{HELTON2006}, Shapley values~\cite{owen2014}, and derivative-based global sensitivity metrics from Sobol' and Kucherenko~\cite{Sobol2009}; see~\cite{saltelli2008global} for a review. Constantine and Diaz~\cite{diaz2015global} propose global sensitivity metrics derived from the eigenpairs of $\mC$ in \eqref{eq:Cmat} and connect them to common sensitivity metrics. 

We briefly review the derivative-based metrics from Sobol' and Kucherenko~\cite{Sobol2009}; the notion of \emph{relevance} in the dimensionless groups derived via active subspaces is closely related to their particular notion of \emph{importance} for global sensitivity. Consider a function $f:\mathbb{R}^m\rightarrow\mathbb{R}$ and a probability density function $\rho:\mathbb{R}\rightarrow\mathbb{R}_+$. The derivative-based global sensitivity metric $\nu_i$ for the $i$th independent variable $x_i$ is
\begin{equation}
\nu_i \;=\;
\int \left(
\frac{\partial}{\partial x_i}f(\vx)
\right)^2\,\rho(\vx)\,d\vx, \qquad i=1,\dots,m.
\end{equation}
The metric $\nu_i$ measures the average change in $f$ as $x_i$ is perturbed in the support of $\rho$. If $\nu_i>\nu_j$, then we expect that perturbing $x_i$ changes $f$ more than perturbing $x_j$, on average. This is the precise notion of \emph{importance} associated with the derivative-based global sensitivity metrics, and it is the notion we use to define \emph{relevance} of a dimensionless group.  Note that $\nu_i$ is also the $i$th diagonal of $\mC$ from \eqref{eq:Cmat}; see~\cite{diaz2015global} for a careful comparison between the active subspaces and the derivative-based global sensitivity metrics. 

\subsection{Transformed coordinates ordered by importance}
\label{ssec:trans}

\noindent
For a function $f$ with density $\rho$ as in Section \ref{sec:dr}, we can use the eigenvectors $\mU$ of $\mC$ from \eqref{eq:Cmat} to derive a new set of coordinates for $f$ that are naturally ordered according to the notion of importance associated with the derivative-based global sensitivity metrics. Define the coordinates $\vy=[y_1,\dots,y_m]^T$ as
\begin{equation}
\vy \;=\; \mU^T\vx. 
\end{equation}
Geometrically, the linear transformation $\mU^T$ rotates the domain of $f$. The density $\rho(\vx)$ from \eqref{eq:Cmat} induces a density $\sigma=\sigma(\vy)$ on $\vy$,
\begin{equation}
\sigma(\vy) \;=\; \rho(\mU\vy),
\end{equation}
since $\text{det}(\mU)=1$. Define a function of the rotated coordinates $g=g(\vy)$ as
\begin{equation}
g(\vy) \;=\; f(\mU\vy),
\end{equation}
and note its gradient with respect to $\vy$ is, by the chain rule,
\begin{equation}
\nabla_{\vy}g(\vy) \;=\; \nabla_{\vy} f(\mU\vy) \;=\; 
\mU^T\nabla_{\vx} f(\mU\vy),
\end{equation}
where the subscript on the gradient operator indicates which variables the derivatives are takeen with respect to. Each component of the gradient vector $\nabla_{\vy} g$ is
\begin{equation}
\frac{\partial}{\partial y_i}\,g(\vy) \;=\;
\vu_i^T\nabla_{\vx}f(\mU\vy),\qquad i=1,\dots,m.
\end{equation}
The derivative-based sensitivity metric $\nu_i$ for the $i$th component of $\vy$ with respect to $g$ is 
\begin{equation}
\label{eq:nu}
\nu_i \;=\; \int \left(\frac{\partial}{\partial y_i}\,g(\vy)\right)^2\,\sigma(\vy)\,d\vy,\qquad i=1,\dots,m.
\end{equation}
This number measures how much $g$ changes, on average, as $y_i$ is perturbed by a small amount. A quick calculation shows that $\nu_i$ is equal to the $i$th eigenvalue of $\mC$:
\begin{equation}
\begin{aligned}
\nu_i
&= \int \left(\frac{\partial}{\partial y_i}\,g(\vy)\right)^2\,\sigma(\vy)\,d\vy\\
&= \int \left( \vu_i^T\nabla_{\vx}f(\mU\vy) \right)^2\,\rho(\mU\vy)\,d\vy\\
&= \int \left( \vu_i^T\nabla_{\vx}f(\mU\mU^T\vx) \right)^2\,\rho(\mU\mU^T\vx)\,d\vx\\
&= \int \left( \vu_i^T\nabla_{\vx}f(\vx) \right)
\left( \nabla_{\vx}f(\vx)^T\vu_i \right)
\,\rho(\vx)\,d\vx\\
&= \vu_i^T\,
\left(\int \nabla_{\vx}f(\vx)\, \nabla_{\vx}f(\vx)^T\,\rho(\vx)\,d\vx\right)
\,\vu_i\\
&= \vu_i^T\,\mC\,\vu_i \;=\; \lambda_i.
\end{aligned}
\end{equation}
Since the eigenvalues are ordered in descending order---i.e., $\lambda_i\geq\lambda_j$ for $i<j$---the components of $\vy$ are ordered according to their importance as measured by the derivative-based sensitivity metrics $\nu_1,\dots,\nu_m$.

\subsection{Deriving unique and relevant dimensionless groups}
\label{ssec:unique}

\noindent
If we define \emph{relevance} as the notion of \emph{importance} from the derivative-based global sensitivity metrics---i.e., how very small changes to inputs affect the output, on average---then we can derive relevant dimensionless groups by computing active subspaces from the dimensionless form of the physical relationship \eqref{eq:phi}. To make the averaging precise, it is sufficient to choose a probability density function on the independent variables $\vq$. This density should quantify the physical regime of interest. For example, a density function with compact support could constrain the independent variables to ranges of values associated with desired physical phenomena---such as laminar versus turbulent flow or liquid versus gas phase. We make the following derivation simpler by assuming that each independent variable $q_i$ is constrained to the interval $[q_i^\ell,q_i^u]$, where $q_i^\ell>0$. The strict positivity of $q_i$ can always be satisfied by a shift. For example, if pressure fluctuation about a mean pressure is an independent variable, then shift the independent variable by the mean pressure so all possible inputs are strictly positive. Choose a bounded density function $\rho_i(q_i)$ with support on the interval $[q_i^\ell,q_i^u]$. In the absence of preference for particular subsets of $[q_i^\ell,q_i^u]$, a uniform density is sufficient (but not necessary); Jaynes expounds on the uniform density as an appropriate choice when there is no other preference or information~\cite[Chapter 12]{Jaynes2003}. Construct a joint density as a product,
\begin{equation}
\label{eq:rho}
\rho(\vq) = \prod_{i=1}^m \rho_i(q_i)
\qquad \mbox{with support} \qquad
\sQ = [q_1^\ell, q_1^u]\times\cdots\times [q_m^\ell, q_m^u].
\end{equation}
The product form is equivalent to assuming that the variables are independent in the probabilistic sense over $\sQ$. If one has information that the independent variables should not be probabilistically independent, then she may choose a joint density $\rho(\vq)$ that is not necessarily a product of univariate densities; a product density is not required for well defined active subspaces. However, the support should be shifted as needed to ensure that each component of $\vq$ remains positive.

The cost of uniqueness and relevance is that one must specify the density $\rho$ on the independent variables to define the averaging needed to compute active subspaces. Also, the unique and ranked-by-relevance dimensionless groups depend on the density $\rho$; if one is unsure of how to choose $\rho$, she may try several different $\rho$'s in independent experiments, which we can interpret as identifying unique and relevant dimensionless groups for different physical regimes. 

With $\rho$ chosen, we first create a new set of set of variables for the dimensionless relationship \eqref{eq:phi},
\begin{equation}
\label{eq:gg}
\begin{aligned}
\pi 
&= f(\pi_1,\dots,\pi_n)\\
&= f\big(
\exp(\log(\pi_1)),\dots,\exp(\log(\pi_n))
\big)\\
&= g\big(
\log(\pi_1),\dots,\log(\pi_n)
\big)\\
&= g(\gamma_1,\dots,\gamma_n),
\end{aligned}
\end{equation}
for some function $g:\mathbb{R}^n\rightarrow\mathbb{R}$, where $\gamma_i=\log(\pi_i)$. 

We derive a density function on $\bgamma=[\gamma_1,\dots,\gamma_n]^T$ from the density $\rho(\vq)$. Recall that $\vx=\log(\vq)$ from Section \ref{ssec:deriv}; for $\bpi=[\pi_1,\dots,\pi_n]^T$, by \eqref{eq:dimless},
\begin{equation}
\label{eq:gammamap}
\bgamma \;=\; \log(\bpi) \;=\; \mW^T\vx \;=\; \mW^T\log(\vq).
\end{equation}
The density function on $\bgamma$ is well defined by the map \eqref{eq:gammamap}; denote this density $\sigma(\bgamma)$. Assume that $g$ from \eqref{eq:gg} satisfies the conditions needed to get active subspaces (i.e., differentiable and square integrable derivatives with respect to $\sigma(\bgamma)$). Then active subspaces for the dimensionless relationship can be constructed from the eigendecomposition
\begin{equation}
\label{eq:Cg}
\int \nabla g(\bgamma)\, \nabla g(\bgamma)^T\,\sigma(\bgamma)\,d\bgamma
\;=\; \mU\Lambda\mU^T.
\end{equation}
As in Section \ref{ssec:trans}, we define a new set of coordinates $\bdelta=[\delta_1,\dots,\delta_n]^T$ by the orthogonal matrix $\mU$ of eigenvectors that are ordered by \emph{importance} as defined by the derivative-based sensitivity metrics. Let $\bdelta=\mU^T\bgamma$, and define the function $h=h(\bdelta)$ as
\begin{equation}
h(\bdelta) \;=\; g(\mU\bdelta).
\end{equation}
Assume the eigenvalues from \eqref{eq:Cg} are distinct, i.e., $\lambda_1>\cdots>\lambda_n$. Observe that
\begin{equation}
\label{eq:Z}
\bdelta \;=\; \mU^T\bgamma \;=\; \mU^T\mW^T\vx \;=\; \mZ^T\vx,
\end{equation}
where $\mZ=\mW\mU\in\mathbb{R}^{m\times n}$. Recall that $m$ is the number of the original independent variables, and $n$ is the number of dimensionless groups according to the Buckingham Pi Theorem; see Section \ref{ssec:nondim}. In other words, there is one column of $\mZ$ per dimensionless group, and the columns of $\mZ = \bmat{\vz_1 & \cdots & \vz_n}$ define the unique and relevant dimensionless groups. In particular, the unique dimensionless groups, denoted $\hat{\pi}_i$, are
\begin{equation}
\label{eq:udim}
\hat{\pi}_i \;=\; \hat{\pi}_i(\vq) \;=\; \exp\left(\vz_i^T\log(\vq)\right) \;=\; \exp(\delta_i), \qquad i=1,\dots,n,
\end{equation}
where $\delta_i$ is the $i$th component of $\bdelta$ from \eqref{eq:Z}. We emphasize that the sense in which the $\hat{\pi}_i$'s are \emph{unique} is the same sense of uniqueness for eigenvectors of a symmetric matrix---that is, unique up to a constant. More precisely, if $\vu_i$ is an eigenvector of the matrix in \eqref{eq:Cg}, then $\alpha\,\vu_i$, with $\alpha\in\mathbb{R}$, is also an eigenvector. This implies that, under this construction, the dimensionless group $\hat{\pi}_i^\alpha$ is indistinguishable from $\hat{\pi}_i$ from \eqref{eq:udim}. 

To see that the $\hat{\pi}_i$ are dimensionless, note
\begin{equation}
\mD\vz_i \;=\; \mD\mW\vu_i \;=\; \left[\mathbf{0}_{k\times n}\right]\,\vu_i \;=\; \mathbf{0}_{k\times 1}.
\end{equation}
In words, $\vz_i$ is in the null space of $\mD$, so it defines a dimensionless group. Compare the form of \eqref{eq:udim} to \eqref{eq:dimless}. The difference is in the vector of exponents; the vector $\vw_i$ from \eqref{eq:dimless} is not unique, whereas $\vz_i$ from \eqref{eq:udim} is unique---in the sense of eigenvectors---as seen in the following theorem.

\begin{theorem}
Assume the eigenvalues from \eqref{eq:Cg} are distinct, i.e., $\lambda_1>\cdots>\lambda_n$. Then the vectors $\{\vz_i\}$ from \eqref{eq:udim} are unique up to a constant.
\end{theorem}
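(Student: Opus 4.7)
The plan is to show that $\vz_i = \mW\vu_i$ (where $\vu_i$ is an eigenvector of the matrix in \eqref{eq:Cg}) is intrinsic, in two senses: (i) it is independent of the particular basis $\mW$ chosen for the null space of $\mD$, and (ii) for fixed $\mW$, the eigenvector $\vu_i$ is unique up to scalar because the eigenvalue $\lambda_i$ is simple. Combining these gives that $\vz_i$ is unique up to scalar, which is what the theorem asserts.

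First I would address the easier of the two, which follows directly from linear algebra. If $\mU\Lambda\mU^T$ is an eigendecomposition of a symmetric matrix and $\lambda_i$ has multiplicity one, then the corresponding eigenspace is one-dimensional, so its generator $\vu_i$ is determined up to multiplication by a nonzero scalar. (The paper already fixes $\mU$ to be orthogonal, which pins $\vu_i$ down to a sign, but the weaker uniqueness-up-to-scalar suffices for the statement.) Hence, for a given $\mW$, the vector $\vz_i=\mW\vu_i$ is unique up to the same scalar.

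The main obstacle is (i), showing the construction does not depend on the non-unique basis $\mW$ for the null space of $\mD$. Let $\mV=\mW\mQ$ be another orthogonal basis, with $\mQ\in\mathbb{R}^{n\times n}$ orthogonal. The alternate dimensionless log-coordinates are $\tilde{\bgamma}=\mV^T\vx=\mQ^T\bgamma$, the alternate function is $\tilde{g}(\tilde{\bgamma})=g(\mQ\tilde{\bgamma})$, and the induced density transforms as $\tilde{\sigma}(\tilde{\bgamma})=\sigma(\mQ\tilde{\bgamma})$ since $|\det\mQ|=1$. By the chain rule $\nabla\tilde{g}(\tilde{\bgamma})=\mQ^T\nabla g(\mQ\tilde{\bgamma})$, so the analogue of \eqref{eq:Cg} under the change of basis becomes
\begin{equation*}
\tilde{\mC} \;=\; \int \mQ^T\nabla g(\mQ\tilde{\bgamma})\nabla g(\mQ\tilde{\bgamma})^T\mQ\,\sigma(\mQ\tilde{\bgamma})\,d\tilde{\bgamma}
\;=\; \mQ^T\mC\mQ \;=\; (\mQ^T\mU)\Lambda(\mQ^T\mU)^T,
\end{equation*}
after substituting $\bgamma=\mQ\tilde{\bgamma}$. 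Thus the eigenvalues $\Lambda$ are preserved and the new eigenvector matrix is $\tilde{\mU}=\mQ^T\mU$. The alternate $\tilde{\mZ}=\mV\tilde{\mU}=\mW\mQ\mQ^T\mU=\mW\mU=\mZ$, so $\vz_i$ is the same.

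Putting the two pieces together yields the claim: for any admissible choice of null-space basis $\mW$ the vectors $\vz_i$ coincide, and once fixed, distinctness of eigenvalues forces each $\vz_i$ to be determined up to a scalar multiple. The only subtlety I anticipate is being careful about the induced density $\sigma$ on $\bgamma$ (it is the pushforward of $\rho$ through $\vq\mapsto\mW^T\log(\vq)$, a non-invertible map when $n<m$), but the argument above only uses the orthogonal change of coordinates $\tilde{\bgamma}=\mQ^T\bgamma$ at the level of $\bgamma$, where the usual Jacobian calculation applies cleanly.
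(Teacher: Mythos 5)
Your proposal is correct and follows essentially the same route as the paper's proof: the same orthogonal change of basis $\mV=\mW\mQ$, the same computation showing the new matrix is $\mQ^T\mC\mQ$ with eigenvectors $\mQ^T\mU$, and the same cancellation $\mW\mQ\mQ^T\mU=\mW\mU$. Your explicit separation of the simple-eigenvalue/uniqueness-up-to-scalar step is a minor organizational improvement over the paper, which handles it only in a closing remark, but the substance is identical.
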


\begin{proof}
Let $\mW\in\mathbb{R}^{m\times n}$ be a particular orthogonal basis for the null space of $\mD$ from \eqref{eq:null}. Let $\mQ$ be an orthgonal $n\times n$ matrix, and define $\mW'=\mW\mQ$. Define $\bgamma'=(\mW')^T\log(\vq)$ as in \eqref{eq:gammamap} and note
\begin{equation}
\bgamma' \;=\; \mQ^T\mW^T\log(\vq) \;=\; \mQ^T\bgamma.
\end{equation}
Let $g'(\bgamma')=g(\mQ\bgamma')$ for $g$ from \eqref{eq:gg}, and let $\sigma'(\bgamma') = \sigma(\mQ\bgamma')$ for $\sigma$ associated with the map \eqref{eq:gammamap}. Denote the gradient with respect to $\bgamma'$ as $\nabla'$. The active subspaces of $g'$ are defined by the matrix
\begin{equation}
\mC' \;=\; \int \nabla' g'(\bgamma')\,\nabla' g'(\bgamma')^T\,\sigma'(\bgamma')\,d\bgamma'.
\end{equation}
With a change of variables, since the determinant of $\mQ$ is 1,
\begin{equation}
\begin{aligned}
\mC' &= 
\mQ^T\,\left(
\int \nabla g'(\mQ^T\bgamma)\,\nabla g'(\mQ^T\bgamma)^T\,\sigma(\bgamma)\,d\bgamma
\right)\,\mQ\\
&= \mQ^T\,\left(
\int \nabla g(\bgamma)\,\nabla g(\bgamma)^T\,\sigma(\bgamma)\,d\bgamma
\right)\,\mQ\\
&= \mQ^T\,\left(\mU\Lambda\mU^T\right)\,\mQ\\
&= (\mU')\Lambda(\mU')^T,
\end{aligned}
\end{equation}
where $\mU'=\mQ^T\mU$. Define $\mZ'=\mW'\mU'$, and note
\begin{equation}
\mZ' = \mW'\mU' = \mW\mQ\mQ^T\mU = \mW\mU = \mZ,
\end{equation}
for $\mZ$ from \eqref{eq:Z}, as required. Note that this derivation is identical if one scales the eigenvectors $\mU$ by constants. 
\end{proof}

The proof requires that the eigenvalues of $\mC$ are separated so that the eigenvectors $\mU$ are unique up to a constant. If two or more eigenvalues are equal (i.e., if an eigenvalue has multiplicity greater than 1), then the associated columns of $\mZ$ are not unique; the associated eigenvectors only form a basis for the eigenspace. However, in this case, there is no preference for the ordering by relevance; the condition for unique dimensionless groups (i.e., distinct eigenvalues) is the same condition that implies some dimensionless groups are more relevant than others. In other words, nonuniqueness (in the sense of eigenvectors) occurs if and only if there is no prefence for relative importance, as indicated by the derivative-based sensitivity metrics \eqref{eq:nu}.

\section{Algorithms for estimating the unique dimensionless groups}
\label{sec:algs}

\noindent
We present two different approaches for numerically estimating the exponents of the unique and relevant dimensionless groups ($\{\vz_i\}$ from \eqref{eq:udim}). Recall the idealized physical system $m$ independent input variables $\vq=[q_1,\dots,q_m]^T$ and 1 dependent output variable $q$. An idealized \emph{experiment} chooses values for $\vq$ and determines the associated $q$. This determination could be the result of a physical experiment or a computational model. We assume that (i) the experimenter has precise control over the inputs and (ii) noise or error in the output is negligible. We recognize that such a clean set up is rare in practice; if the experimenter has information on errors or uncertainties in the system, the following algorithms should be modified to account for those errors. To account for generic noise or error, we would need to adopt or propose a mathematical model for generic noise; such models are hotly debated, and we consider an appropriate treatment of noise beyond the scope of the present work. 

For each algorithm, we assume that the experimenter first performs classical dimensional analysis to get (i) a particular vector $\vw$ that nondimensionalizes $q$ from \eqref{eq:qoi} and (ii) particular vectors $\{\vw_i\}$ for the dimensionless groups; see \eqref{eq:dimless}. One way to get $\{\vw_i\}$ is to compute the last $n$ singular vectors of $\mD$ from \eqref{eq:D}~\cite[Chapter 2.4]{gvl2013}. 

\subsubsection{Response surface-based algorithm}

\noindent 
The first algorithm is based on fitting a response surface---as in standard semi-empirical modeling---and estimating the eigenvectors that define the active subspaces from the surface's gradient. This idea has precedent. Yang et al.~\cite{Yang2016} build a global polynomial response (i.e., a \emph{polynomial chaos} surrogate) and compute gradients from this approximation. In the context of dimension reduction for regression, Fukumizu and Leng~\cite{fukumizu2014} build a kernel-based approximation (i.e., a radial basis approximation) and compute gradients. We do not specify the form of the response surface in the algorithm.

Assume the experimenter has performed an appropriate experimental design that produces input/output pairs $(\vq^{(j)},q^{(j)})$ with $j=1,\dots,N$, where the inputs $\{\vq^{(j)}\}$ are consistent with the chosen density $\rho$ from \eqref{eq:rho}. Algorithm \ref{alg:rs} starts with these pairs.  

\begin{algorithm}
Given a vector $\vw$ from \eqref{eq:qoi}, vectors $\vw_1,\dots,\vw_n$ from \eqref{eq:dimless}, and pairs $(\vq^{(1)},q^{(1)}), \dots, (\vq^{(N)},q^{(N)})$ from a design of experiments consistent with $\rho$ from \eqref{eq:rho}.
\begin{enumerate}
\itemsep0em
\item Compute evaluations of the dimensionless independent variable,
\begin{equation}
\pi^{(j)} \;=\; q^{(j)}\,\exp\left(-\vw^T\log(\vq^{(j)})\right).
\end{equation}
\item Compute evaluations of the logs of the dimensionless groups,
\begin{equation}
\gamma_i^{(j)} \;=\; \vw_i^T\log(\vq^{(j)}), \qquad i=1,\dots,n,
\end{equation}
and $\bgamma^{(j)}=[\gamma_1^{(j)},\dots,\gamma_n^{(j)}]^T$.
\item Fit a response surface $\hat{g}$ with the pairs $\{(\pi^{(j)},\bgamma^{(j)})\}$ such that
\begin{equation}
\label{eq:ghat}
\pi^{(j)} \;\approx\; \hat{g}(\bgamma^{(j)}),\qquad j=1,\dots,N.
\end{equation}
\item Use the response surface gradient $\nabla\hat{g}$ to define active subspaces:
\begin{equation}
\label{eq:gas}
\int \nabla \hat{g}(\bgamma)\, \nabla \hat{g}(\bgamma)^T\,\sigma(\bgamma)\,d\bgamma
\;\approx\; \hmU\hLambda\hmU^T.
\end{equation}
Estimate these integrals with any appropriate numerical integration rule; the approximation sign in \eqref{eq:gas} signifies the numerical approximation.
\item Compute the weight vectors $\{\hat{\vz}_i\}$ that define the unique and relevant dimensionless groups (see \eqref{eq:Z}),
\begin{equation}
\label{eq:zhatrs}
\hat{\vz}_i \;=\; \mW\hat{\vu}_i, \qquad i=1,\dots,n.
\end{equation}
\end{enumerate}
\caption{Response surface-based algorithm}
\label{alg:rs}
\end{algorithm}

The computed vectors $\{\hat{\vz}_i\}$ from \eqref{eq:zhatrs} in Algorithm \ref{alg:rs} have two essential numerical errors. First, the response surface $\hat{g}$ from \eqref{eq:ghat} may be an inaccurate approximation of the true $g$ from \eqref{eq:gg}---depending on the chosen design of experiments and the smoothness of $g$. Second, the numerical approximation of the integrals in \eqref{eq:gas} may be inaccurate---depending on the smoothness of the response surface gradients and the number $n$ of components in $\bgamma$ (i.e., the dimension of the integration problem). Constantine and Gleich analyzed a Monte Carlo method for estimating active subspaces~\cite{constantine2015computing}. In theory, both of these errors can be controlled with more experiments (i.e., larger $N$) and more accurate numerical integration.

The advantage of Algorithm \ref{alg:rs} is that it can be performed using an existing set of experiments without additional experimental data---assuming the existing experiments are sufficient to build the response surface $\hat{g}$ in \eqref{eq:ghat}. 

\subsubsection{Finite difference gradients}

\noindent 
The second algorithm is based on finite difference approximations of the gradient of $g$ from \eqref{eq:gg}. In contrast to Algorithm \ref{alg:rs}, the finite difference-based approach requires additional experiments to estimate gradients at an initial design of experiments. Assume the experimenter has performed an experimental design that produces input/output pairs $(\vq^{(j)},q^{(j)})$ with $j=1,\dots,N$, where the inputs $\{\vq^{(j)}\}$ are appropriate for numerical integration with respect to the chosen density $\rho$ from \eqref{eq:rho}. And let $\omega^{(1)},\dots,\omega^{(N)}$ be the weights of the associated numerical integration scheme. For a Monte Carlo scheme~\cite{constantine2015computing}, $\omega^{(j)} = 1/N$ and the $\vq^{(j)}$'s are drawn independently according to $\rho(\vq)$. 

\begin{algorithm}
Given a vector $\vw$ from \eqref{eq:qoi}, vectors $\vw_1,\dots,\vw_n$ from \eqref{eq:dimless}, an integration rule with point/weight pairs $(\vq^{(1)},\omega^{(1)}), \dots, (\vq^{(N)},\omega^{(N)})$ for approximating integrals with respect to $\rho$ from \eqref{eq:rho}, and associated experimental results $q^{(1)},\dots,q^{(N)}$.
\begin{enumerate}
\itemsep0em
\item Compute evaluations of the dimensionless independent variable,
\begin{equation}
\pi^{(j)} \;=\; q^{(j)}\,\exp\left(-\vw^T\log(\vq^{(j)})\right).
\end{equation}
\item Compute evaluations of the logs of the dimensionless groups,
\begin{equation}
\gamma_i^{(j)} \;=\; \vw_i^T\log(\vq^{(j)}), \qquad i=1,\dots,n,
\end{equation}
and $\bgamma^{(j)}=[\gamma_1^{(j)},\dots,\gamma_n^{(j)}]^T$.
\item For each $\bgamma^{(j)}$, for $k=1,\dots,n$, find $\vq^{(j,k)}$ that satisfy
\begin{equation}
\label{eq:fdpts}
\mW^T\log(\vq^{(j,k)}) = \bgamma^{(j)} + h\,\ve_k,\qquad j=1,\dots,N,
\end{equation}
where $\ve_k$ is a vector of zeros except for a 1 in the $k$th element and $h$ is the finite difference step size. 
\item For each $\vq^{(j,k)}$, run an experiment to get the corresponding $q^{(j,k)}$. 
\item Compute the corresponding dimensionless dependent variables
\begin{equation}
\pi^{(j,k)} \;=\; q^{(j,k)}\,\exp\left(-\vw^T\log(\vq^{(j,k)})\right).
\end{equation}
\item Compute finite differences
\begin{equation}
\frac{\partial}{\partial \gamma_k} g(\bgamma^{(j)})
\;\approx\; \frac{1}{h}(\pi^{(j,k)} - \pi^{(j)}),
\end{equation}
and let $\nabla_h g(\bgamma^{(j)})\in\mathbb{R}^n$ be the collection of finite differences.
\item Estimate the active subspaces as
\begin{equation}
\label{eq:evecsfd}
\sum_{j=1}^N \omega^{(j)}\,
\nabla_h g(\bgamma^{(j)})\, \nabla_h g(\bgamma^{(j)})^T 
\;=\; \hmU\hLambda\hmU^T. 
\end{equation}
\item Compute the weight vectors $\{\hat{\vz}_i\}$ that define the unique and relevant dimensionless groups (see \eqref{eq:Z}),
\begin{equation}
\label{eq:zhatfd}
\hat{\vz}_i \;=\; \mW\hat{\vu}_i, \qquad i=1,\dots,n.
\end{equation}
\end{enumerate}
\caption{Finite difference-based algorithm}
\label{alg:fd}
\end{algorithm}

Several comments regarding Algorithm \ref{alg:fd} are in order. The idea is to estimate the gradients of $g$ from \eqref{eq:gg} without constructing a global response surface. There are two potential numerical sources of errors: (i) the errors in the numerical integration and (ii) the errors in the finite difference approximations. Both of these errors are controllable using more integration points or smaller finite difference step size, respectively, subject to all the usual caveats associated with numerical approximations of integrals and derivatives. Algorithm \ref{alg:fd} uses a first-order finite difference scheme; of course, the algorithm may be adapted to use higher-order schemes~\cite{fornberg1988} at a cost of additional experiments. The points in the space of independent variables computed as in \eqref{eq:fdpts} are not unique, because the linear system is underdetermined. However, the differences in solutions do not affect the finite difference approximations of the dimensionless physical relationship's derivatives.  

A disadvantage of Algorithm \ref{alg:fd} is that it requires $Nn$ more experiments than the initial experimental design with $N$ points. One idea to address this issue is to employ the compressed sensing-based approach of Constantine et al.~\cite{constantine2015CAMSAP}, which uses fewer than $n$ random directional derivatives per quadrature point. 

\section{Example: viscous pipe flow}
\label{sec:example}

\noindent
We demonstrate the two main results of the paper---(i) that every semi-empirical model is a ridge function and (ii) the algorithms for computing unique and relevant dimensionless groups---using the classical example of viscous flow through a pipe; see~\cite[Chapter 6]{White2011} and~\cite{Moody1944,Colebrook1937}. Palmer also studies this problem in~\cite[Chapter 5.2]{Palmer2008}. The system's three base units ($k=3$) are kilograms (kg), meters (m), and seconds (s). The physical quantities include the fluid's bulk velocity $V$, density $\rho$, and viscosity $\mu$; the pipe's diameter $D$ and characteristic wall roughness $\varepsilon$; and the pressure loss $dp/dx$ between pipe ends. Table \ref{tab:pipedim} summarizes the quantities and their units. We treat $dp/dx$ as the quantity of interest; in other words, we imagine the scenario where the pipe engineer can control fluid properties, pipe diameter, and flow velocity (i.e., the independent variables) and measure the corresponding pressure loss (i.e., the dependent variable). The codes for reproducing the following experiments are available at \url{https://bitbucket.org/paulcon/pipe-code}.

\begin{figure}[ht]
\centering
\includegraphics[width=.71\textwidth]{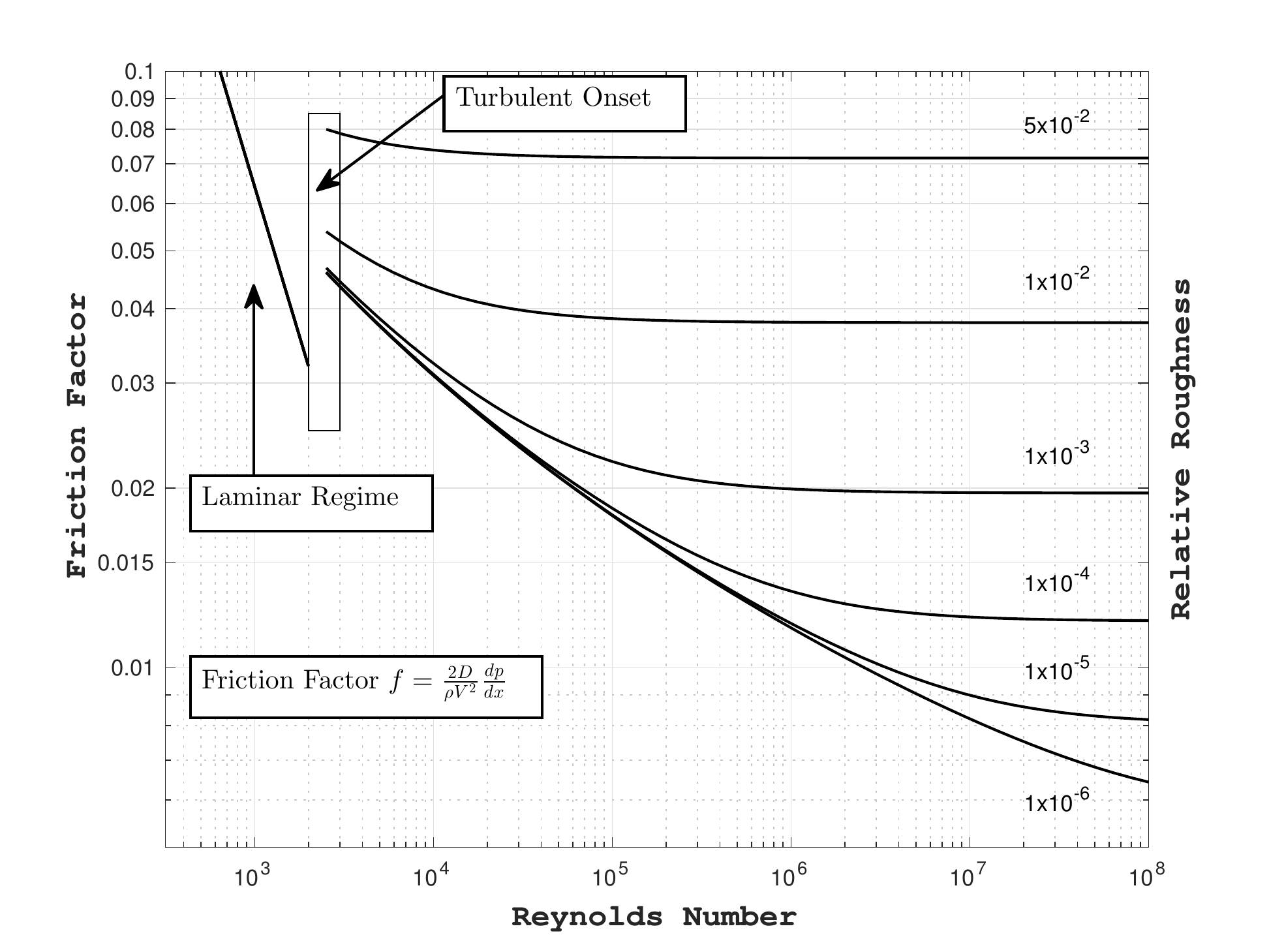}
\caption{The Moody Diagram plots the friction factor (dimensionless pressure loss) against the Reynolds number and relative roughness. Transition from laminar flow (governed by the Poiseuille relation) to turbulent flow (modeled by the Colebrook equation) occurs at a critical Reynolds number $\Ren_c \approx 3000$.}
\label{fig:moody}
\end{figure}

The Moody Diagram implicitly relates the physical quantities; Figure \ref{fig:moody}  plots the Fanning friction factor $\lambda$ defined by
\begin{equation}
\lambda \;=\; \frac{dp/dx\,D}{\frac{1}{2}\rho V^2},
\end{equation}
against the Reynolds number $\frac{\rho V D}{\mu}$ and relative roughness $\frac{\varepsilon}{D}$~\cite{Moody1944}. Below a critical Reynolds number around $\Ren_c=3\times10^3$, the friction factor satisfies the Poiseuille relation~\cite[Chapter 6]{White2011},
\begin{equation}
\label{eq:poiseuille}
\lambda \;=\; \frac{64}{\Ren}, 
\end{equation}
For $\Ren>\Ren_c$, the Colebrook equation~\cite{Colebrook1937} implicitly defines the relationship between friction factor and the other quantities,
\begin{equation}
\label{eq:colebrook}
\frac{1}{\sqrt{\lambda}} \;=\; -2.0\log_{10}\left(\frac{1}{3.7}\frac{\varepsilon}{D} + \frac{2.51}{\Ren\,\sqrt{\lambda}}\right), 
\end{equation}
The Colebrook equation is valid through transition to full turbulence. 
%Both \eqref{eq:poiseuille} and \eqref{eq:colebrook} are approximated by Churchill's explicit, semi-empirical expression for the friction factor~\cite{Churchill1977}:
%\begin{equation}
%\label{eq:churchill}
%\lambda \;=\; 8\,\left[
%\left(\frac{8}{\Ren}\right)^{12} + \frac{1}{(A+B)^{3/2}}
%\right]^{1/12},
%\end{equation}
%where
%\begin{equation}
%A = \left[
%2.457\,\log\left(
%\left(
%\frac{7}{\Ren}
%\right)^{0.9}
%+ 0.27\,\frac{\varepsilon}{D}
%\right)
%\right]^{16},
%\qquad
%B = \left(
%\frac{37530}{\Ren}
%\right)^{16}.
%\end{equation}

\begin{table}[ht]
\centering
\caption{Physical quantities and their units and dimension vectors for the viscous pipe example.}
\label{tab:pipedim}
\begin{tabular}{llll}
physical quantity & symbol & units & dimension vector\\
\hline
fluid velocity & $V$ & $\text{m}\,\text{s}^{-1}$ & [0,1,-1]\\
fluid density & $\rho$ & $\text{kg}\,\text{m}^{-3}$ & [1,-3,0]\\
fluid viscosity & $\mu$ & $\text{kg}\,\text{m}^{-1}\,\text{s}^{-1}$ & [1,-1,-1]\\
pipe diameter & $D$ & $\text{m}$ & [0,1,0]\\
pipe roughness & $\varepsilon$ & $\text{m}$ & [0,1,0]\\
pressure loss & $dp/dx$ & $\text{kg}\,\text{m}^{-2}\,\text{s}^{-2}$ & [1,-2,-2]
\end{tabular}
\end{table}

To mimic the idealized experimental set up, we build a simple computational model. Given the independent variables, we solve \eqref{eq:colebrook} with a Newton method to estimate the dimensionless friction factor $\lambda=\lambda(V,\rho,\mu,D,\varepsilon)$; this model is valid over a wide range of flow conditions, i.e., laminar and turbulent regimes. Given the friction factor $\lambda$, we compute the pressure drop as~\cite[Chapter 5.2]{Palmer2008},
\begin{equation}
\label{eq:dpdx}
\frac{dp}{dx} \;=\; \frac{\lambda\,\rho\,V^2}{2\,D}.
\end{equation}
We consider three sets of flow conditions---loosely, laminar flow, turbulent flow, and high Reynolds number flow---each defined by its own probability density function on the independent variables. The range for each independent variable in the laminar case is in Table \ref{tab:bounds_laminar}; the ranges for the turbulent case are in Table \ref{tab:bounds_turbulent}; and the ranges for the high Reynolds case are in Table \ref{tab:bounds_highre}. Each set of ranges corresponds to a region of the Moody Diagram in Figure \ref{fig:moody}. Figure \ref{fig:moody_cont} shows the Moody Diagram as a contour plot, where the contours are the friction factor as a function of (the logs of) Reynolds number and roughness scale. Each outlined region corresponds to one of the flow cases of interest where we study the unique and relevant dimensionless groups. The red outline corresponds to laminar flow, the black outline corresponds to turbulent flow, and the magenta outline corresponds to the high Reynolds flow.

\begin{table}[h]
\centering
\caption{Bounds on the independent variables for the laminar flow case.}
\label{tab:bounds_laminar}
\begin{tabular}{lllll}
independent var. & symbol & lower bound & upper bound & units \\
\hline
fluid velocity & $V$ & $2.5\times10^{-2}$ & $3.0\times10^{-2}$ & $\unit{m}/\unit{s}$ \\
fluid density & $\rho$ & $1.0\times10^{-1}$ & $1.4\times10^{-1}$ & $\unit{kg}/\unit{m}^3$ \\
fluid viscosity & $\mu$ & $1.0\times10^{-6}$ & $1.0\times10^{-5}$ & $\unit{kg} / (\unit{m}\unit{s})$ \\
pipe diameter & $D$ & $5.0\times10^{-1}$ & $8.0\times10^{-1}$ & $\unit{m}$ \\
pipe roughness & $\varepsilon$ & $3.0\times10^{-5}$ & $8.0\times10^{-5}$ & $\unit{m}$
\end{tabular}
\end{table}

\begin{table}[h]
\centering
\caption{Bounds on the independent variables for the turbulent flow case.}
\label{tab:bounds_turbulent}
\begin{tabular}{lllll}
independent var. & symbol & lower bound & upper bound & units \\
\hline
fluid velocity & $V$ & $2.0\times10^{+0}$ & $4.0\times10^{+0}$ & $\unit{m}/\unit{s}$ \\
fluid density & $\rho$ & $1.0\times10^{-1}$ & $1.4\times10^{-1}$ & $\unit{kg}/\unit{m}^3$ \\
fluid viscosity & $\mu$ & $1.0\times10^{-6}$ & $1.0\times10^{-5}$ & $\unit{kg} / (\unit{m}\unit{s})$ \\
pipe diameter & $D$ & $5.0\times10^{-1}$ & $1.0\times10^{+0}$ & $\unit{m}$ \\
pipe roughness & $\varepsilon$ & $5.0\times10^{-4}$ & $2.0\times10^{-3}$ & $\unit{m}$ 
\end{tabular}
\end{table}

\begin{table}[h]
\centering
\caption{Bounds on the independent variables for the high Reynolds number case.}
\label{tab:bounds_highre}
\begin{tabular}{lllll}
independent var. & symbol & lower bound & upper bound & units \\
\hline
fluid velocity & $V$ & $5.0\times10^{+2}$ & $7.0\times10^{+2}$ & $\unit{m}/\unit{s}$ \\
fluid density & $\rho$ & $1.0\times10^{-1}$ & $1.4\times10^{-1}$ & $\unit{kg}/\unit{m}^3$ \\
fluid viscosity & $\mu$ & $1.0\times10^{-6}$ & $1.0\times10^{-5}$ & $\unit{kg} / (\unit{m}\unit{s})$ \\
pipe diameter & $D$ & $5.0\times10^{-1}$ & $1.0\times10^{+0}$ & $\unit{m}$ \\
pipe roughness & $\varepsilon$ & $1.0\times10^{-2}$ & $4.0\times10^{-2}$ & $\unit{m}$ 
\end{tabular}
\end{table}

\begin{figure}[ht]
\centering
\includegraphics[width=.8\textwidth]{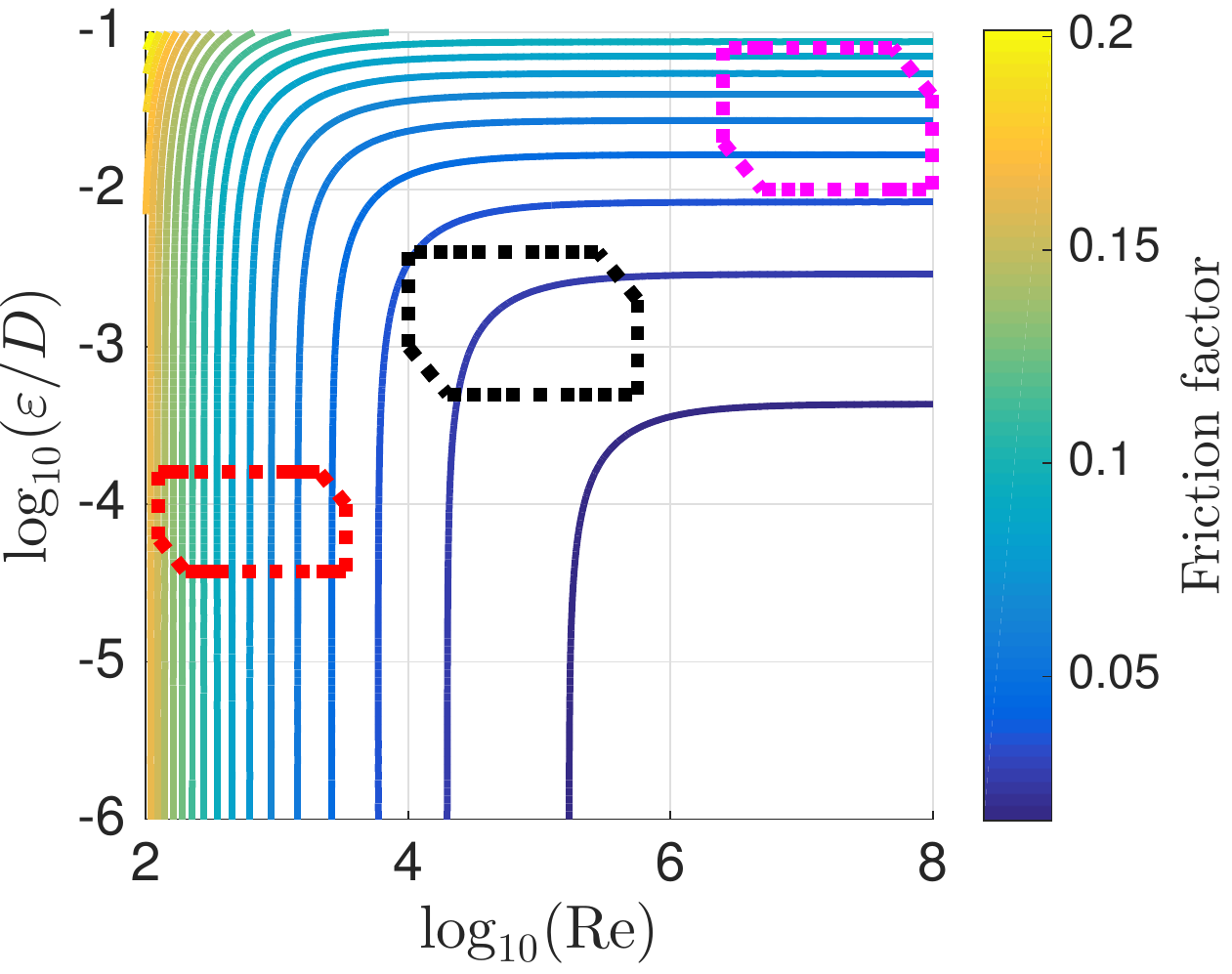}
\caption{The same data as the Moody Diagram in Figure \ref{fig:moody} in a contour plot; the contours represent the friction factor as a function of (i) the base ten log of Reynolds number and (ii) the base 10 log of the roughness scale. The three highlighted regions correspond to the flow regimes where we seek the unique and relevant nondimensional groups: the red outline is the laminar flow, the black outline is the turbulent flow, and the magenta outline is the high Reynolds flow.}
\label{fig:moody_cont}
\end{figure}

\subsection{Dimensional analysis}

\noindent
We first perform dimensional analysis to set up the numerical experiments. The matrix $\mD$ from \eqref{eq:D} contains the dimension vectors from Table \ref{tab:pipedim},
\begin{equation}
\mD \;=\;
\kbordermatrix{ & \rho & \mu & D & \varepsilon & V \cr
\text{kg} & 1 & 1 & 0 & 0 & 0\cr
\text{m} & -3 & -1 & 1 & 1 & 1\cr 
\text{s} & 0 & -1 & 0 & 0 & -1 }.
\end{equation}
The dimension vector for $dp/dx$ is $[1,-2,-2]^T$, and the vector $\vw$ that satisfies \eqref{eq:qoi}, i.e., that nondimensionalizes the pressure loss, is $[1,0,-1,0,2]^T$. The matrix $\mW$ whose columns span the 2-dimensional null space of $\mD$ are computed as the last two right singular vectors of $\mD$. Recall that the dimensional analysis subspace is the span of $\vw$ and $\mW$'s two columns; see \eqref{eq:A}.

\subsection{Pressure loss as a ridge function}

\noindent
With a numerical experiment, we can verify the derivation in Section \ref{ssec:deriv} that any empirical model is a ridge function. We choose $\rho$ as a uniform probability density on the hyperrectangle defined by the ranges in Tables \ref{tab:bounds_laminar} and \ref{tab:bounds_turbulent}. We use the script that computes $dp/dx$ given the independent variables to estimate the active subspaces; to estimate $\mC$ from \eqref{eq:Cmat}, we use a tensor product Gaussian quadrature rule for the approximate integration and first-order finite differences for the approximate partial derivatives. Lemma \ref{lem:Crank} says that the rank of $\mC$ should be at most 3, which is the number of independent variables (5) minus the number of base units (3) plus 1. We use a quadrature rule with 11 points per dimension---a total of 161051 points, which is sufficient for high accuracy. We then perform a finite difference convergence study as the finite difference step size goes to zero. 

\begin{figure}[ht]
\centering
\subfloat[Laminar]{
\label{fig:eiglam}
\includegraphics[width=.3\textwidth]{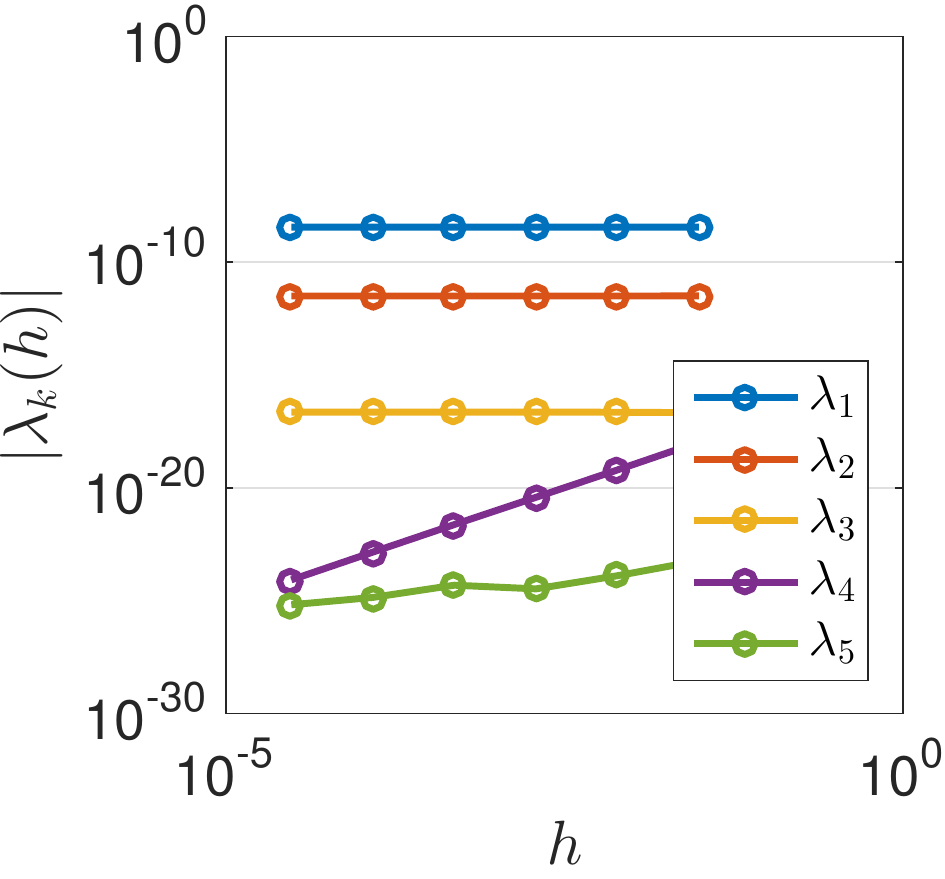}%
}
\subfloat[Turbulent]{
\label{fig:eigturb}
\includegraphics[width=.3\textwidth]{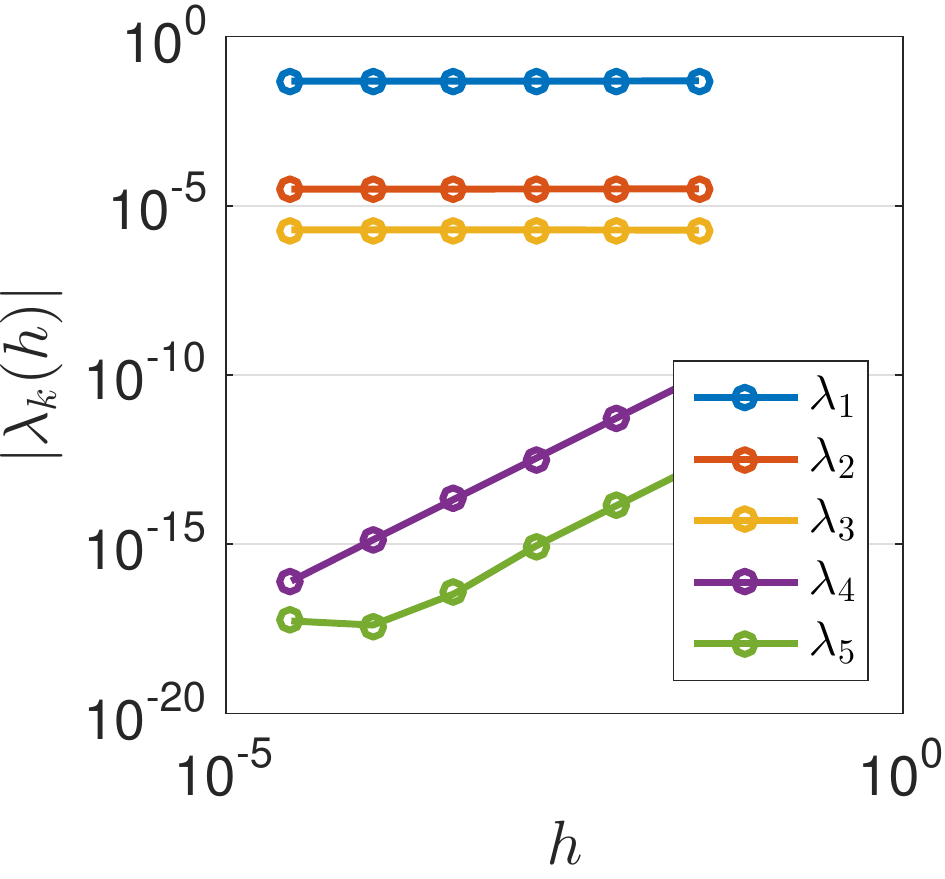}%
}
\subfloat[High Re]{
\label{fig:eighighre}
\includegraphics[width=.3\textwidth]{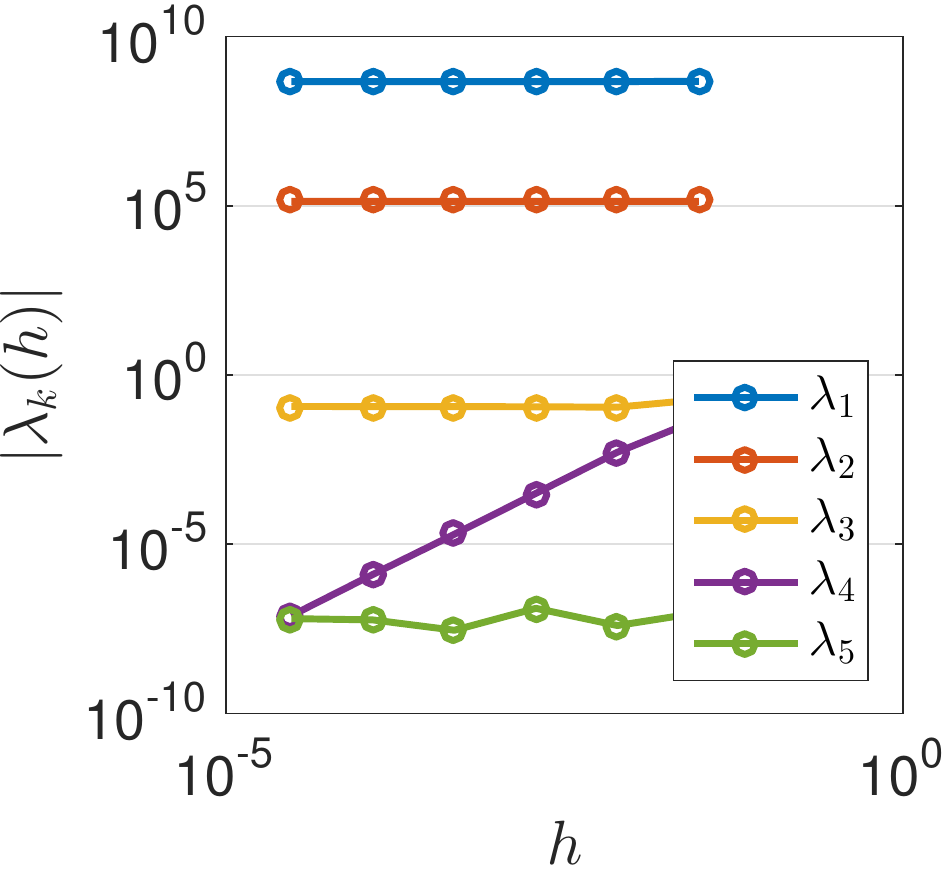}%
}
\caption{Convergence with respect to the finite difference step size $h$ of the eigenvalues from $\mC$ in \eqref{eq:Cmat} for the pressure loss $dp/dx$ as a function of the five independent physical variables. In all cases, the smallest two eigenvalues converge to zero as $h$ goes to zero, which is consistent with the three-dimensional ridge function structure derived in Section \ref{ssec:deriv}.}
\label{fig:eigfd}
\end{figure}

Figure \ref{fig:eigfd} shows the absolute values of the five eigenvalues of $\mC$ as the finite difference step size decreases; Figure \ref{fig:eiglam} shows the laminar flow case defined by the ranges in Table \ref{tab:bounds_laminar}; Figure \ref{fig:eigturb} shows the turbulent flow case defined by Table \ref{tab:bounds_turbulent}; and Figure \ref{fig:eighighre} shows the high Reynolds flow case defined by Table \ref{tab:bounds_highre}. In all cases, the two smallest eigenvalues go to zero at a first order rate as the step size $h$ goes to zero. This suggests that these two eigenvalues are nonzero for a finite step size because of finite difference approximation errors. The three largest eigenvalues converge to some nonzero values. This study verifies the fact, derived in Section \ref{ssec:deriv}, that $dp/dx$ as a function of the independent variables is a ridge function with active subspaces up to dimension 3---independent of the density function $\rho$.

\subsection{Unique and relevant dimensionless groups}

\noindent
We apply Algorithms \ref{alg:rs} and \ref{alg:fd} to the computational model that mimics the pipe experiments. For Algorithm \ref{alg:rs}, we use Matlab's Gaussian process regression (GPR) from the Statistics and Machine Learning Toolbox for the response surface. We use a full quadratic polynomial basis including cross terms, so there are six basis functions for the two dimensionless groups. We use the squared exponential kernel with independent coordinate length scales. The data for the GPR is 1000 pairs $\{(\pi^{(j)}, \bgamma^{(j)})\}$ from \eqref{eq:ghat}, where the $\{\bgamma^{(j)}\}$ are from Matlab's Latin hypercube design function \texttt{lhsdesign}. All parameters are tuned with maximum likelihood estimation using the built-in quasi-Newton solver. Oddly, Matlab's GPR implementation does not have have subroutines to compute gradients of the GPR prediction with respect to parameters, and the user cannot access the model terms needed to compute the GPR gradients directly; we implemented a first-order finite difference approximation for the GPR's gradients; preliminary experiments indicate that a step size of $h=1\times 10^{-6}$ produces 6-to-7 accurate digits. To estimate the eigenpairs in \eqref{eq:gas}, we use a Gauss-Legendre tensor product quadrature with 11 points per dimension, which produces 9-to-10 accurate digits. Note that we do not build the quadrature rule on the two-dimensional space of $\bgamma$ to estimate the integrals in \eqref{eq:gas}. Estimating the bivariate density function on the variables $\bgamma = \mW^T\log(\vq)$ would add another numerical approximation. Instead, we reformulate the integral in terms of $\vq$, where the density function $\rho$ from \eqref{eq:rho} is amenable to tensor product quadrature, i.e., the components of $\vq$ are independent. That is, we take advantage of the equality
\begin{equation}
\int \nabla \hat{g}(\bgamma)\, \nabla \hat{g}(\bgamma)^T\,\sigma(\bgamma)\,d\bgamma
\;=\;
\int \nabla \hat{g}(\mW^T\log(\vq))\, \nabla \hat{g}(\mW^T\log(\vq))^T\,\rho(\vq)\,d\vq,
\end{equation}
and estimate the integrals with quadrature on $\vq$. Admittedly, we have replaced the two-dimensional integrals by five-dimensional integrals for the sake of simpler quadrature implementation; the number of points used in the Gauss-Legendre quadrature is $11^5=161051$. If computing $\nabla g$ was particularly expensive---e.g., a complex simulation model or a physical experiment---then the dimension reduction of $\vq$ to $\bgamma$ to estimate integrals would warrant numerically estimating a quadrature rule for $\sigma(\bgamma)$.

For Algorithm \ref{alg:fd}, we use the same tensor product Gauss-Legendre quadrature rule with 161051 points in five dimensions (11 points per dimension). We ran a preliminary experiment to choose the finite difference step size. Figure \ref{fig:zerr} shows the relative convergence in the components of $\vz_1$ and $\vz_2$ from \eqref{eq:zhatfd} as $h$ decreases (right to left on the horizontal axis) for both the laminar and turbulent flow cases. The apparent first order convergence verifies the implementation. The minimum error suggests that $h=1\times 10^{-6}$ is an appropriate finite difference step size for 6-to-7 digits of accuracy. 

\begin{figure}[ht]
\centering
\subfloat[Laminar]{
\label{fig:zerrlam}
\includegraphics[width=.3\textwidth]{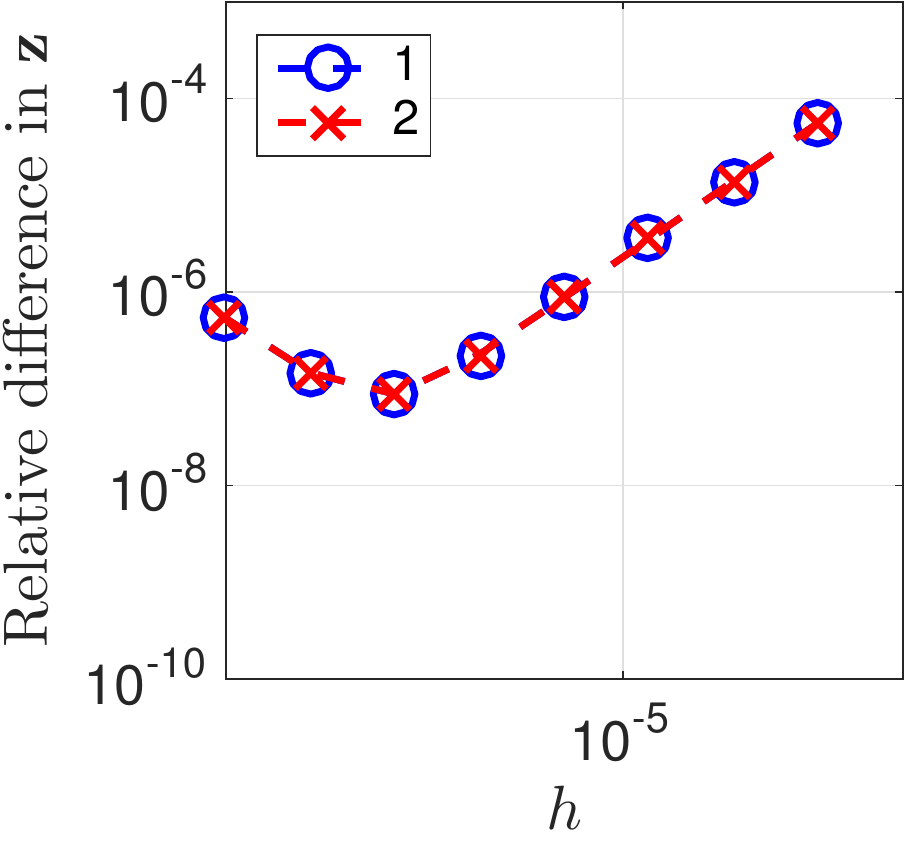}%
}
\subfloat[Turbulent]{
\label{fig:zerrturb}
\includegraphics[width=.3\textwidth]{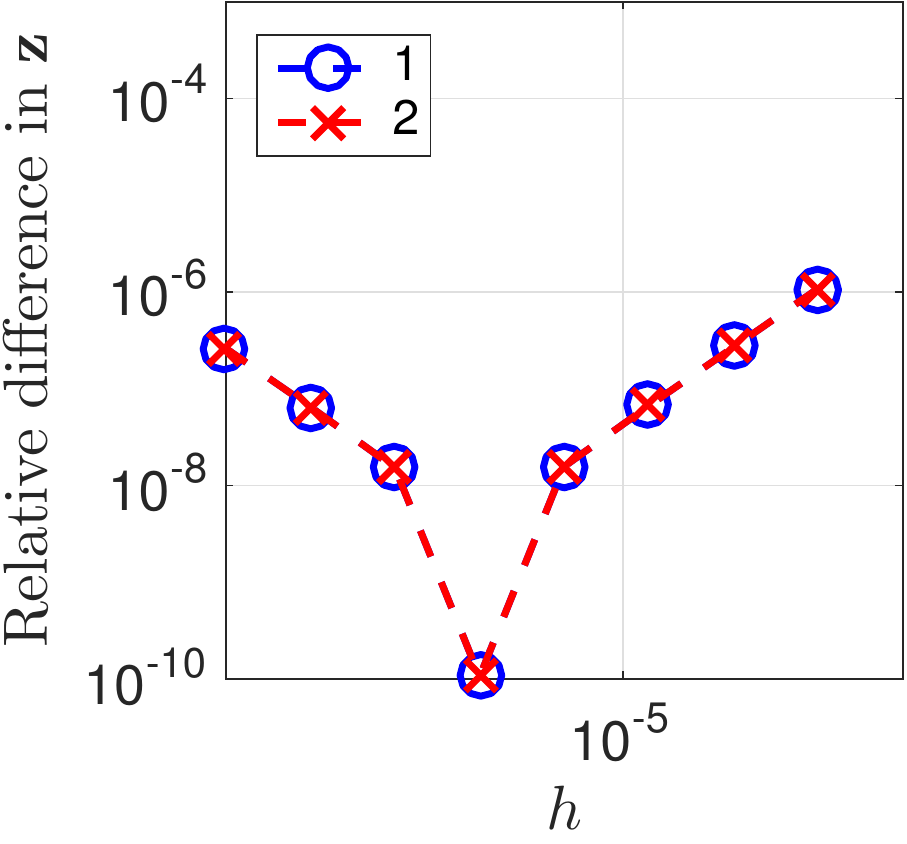}%
}
\subfloat[High Re]{
\label{fig:zerrhighre}
\includegraphics[width=.3\textwidth]{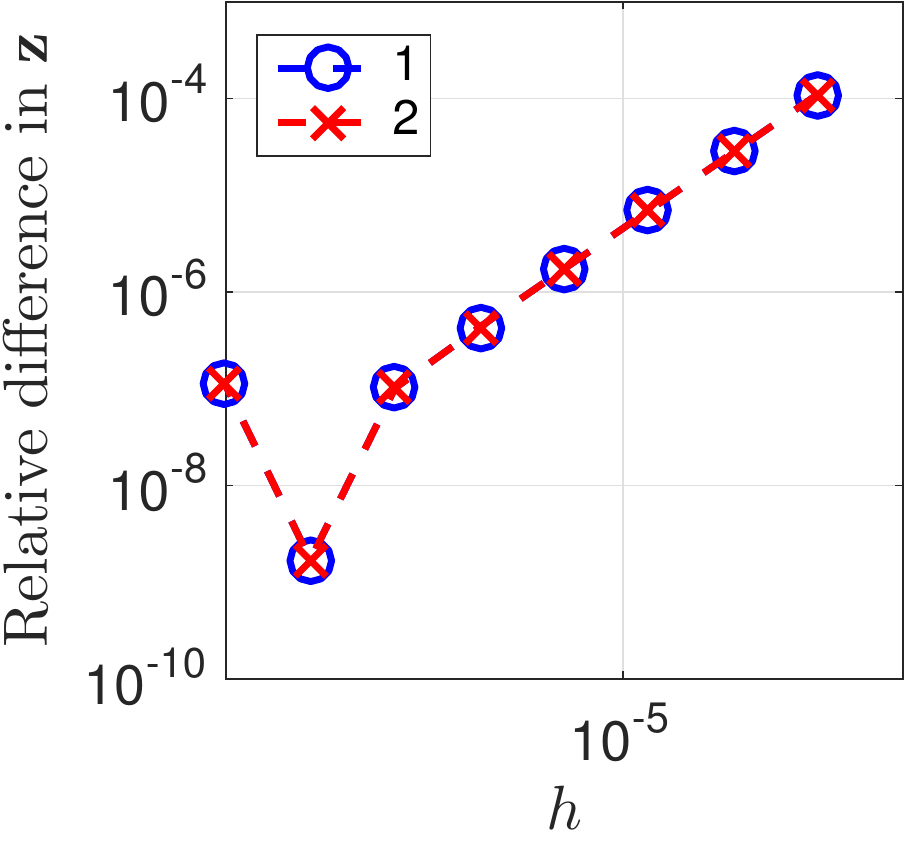}%
}
\caption{Convergence of the computed exponents $\vz_1$ and $\vz_2$ from Algorithm \ref{alg:fd} as the finite difference step size $h$ decreases. Figure \ref{fig:zerrlam} shows results for the laminar flow case; Figure \ref{fig:zerrturb} shows the turbulent flow case; and Figure \ref{fig:zerrhighre} shows the high Reynolds flow case.}
\label{fig:zerr}
\end{figure}

Table \ref{tab:laminar_z} shows the computed values of $\vz_1$ and $\vz_2$ with Algorithms \ref{alg:rs} and \ref{alg:fd} for the laminar flow case defined by the ranges in Table \ref{tab:bounds_laminar}. Recall that $\vz_1$ contains the exponents of the unique, most relevant dimensionless group, and $\vz_2$ contains the exponents of the second most relevant. For reference, we have included the exponents from classical dimensional analysis that correspond to the Reynolds number $\Ren$ and the dimensionless roughness scale $\varepsilon/D$. Up to scaling by 2, both algorithms find that the Reynolds number is the most relevant dimensionless group. The smaller eigenvalue associated with $\vz_2$ suggests that there is only one important dimensionless group (that is, the Reynolds number), and the components of $\vz_2$ are essentially random and meaningless. This is consistent with the Poiseuille relation \eqref{eq:poiseuille}, i.e., that the friction factor is inversely proportional to the Reynolds number for laminar flow; see~\cite[Chapter 5.2]{Palmer2008}. 

\begin{table}[h]
\centering
\caption{Computed exponents for the laminar pipe flow defined by the ranges in Table \ref{tab:bounds_laminar}}
\label{tab:laminar_z}
\begin{tabular}{l|ll|ll|ll|}
 & \multicolumn{2}{|l|}{DA} & \multicolumn{2}{|l|}{Alg \ref{alg:rs}} & \multicolumn{2}{|l|}{Alg \ref{alg:fd}} \\
variable & $\Ren$ & $\varepsilon/D$ & $\vz_1$ & $\vz_2$ & $\vz_1$ & $\vz_2$  \\
\hline
$\rho$ & 1.000 & 0.000 & 0.500 & 0.189 & 0.500 & -0.189 \\ 
$\mu$ & -1.000 & 0.000 & -0.500 & -0.189 & -0.500 & 0.189 \\
$D$ & 1.000 & -1.000 & 0.500 & -0.567 & 0.500 & 0.567 \\
$\varepsilon$ & 0.000 & 1.000 & 0.000 & 0.756 & -0.001 & -0.756 \\
$V$ & 1.000 & 0.000 & 0.500 & 0.189 & 0.500 & -0.189 \\
\hline
Eigenvalue & --- & --- & 2.19e-2 & 2.65e-8 & 2.39e-2 & 3.39e-9  
\end{tabular}
\end{table}

Table \ref{tab:turbulent_z} shows the computed values of $\vz_1$ and $\vz_2$ for the turbulent flow case defined by the ranges in Table \ref{tab:bounds_turbulent}; for reference we have computed the exponents from classical dimensional analysis. In this case, the two eigenvalues from each algorithm are similar orders of magnitude, which indicates that both dimensionless groups are important. The dimensionless groups identified by the algorithms are not the typical Reynolds number and roughness length scale from classical treatments of this viscous pipe flow problem~\cite[Chapter 5.2]{Palmer2008}. However, the dimensionless groups computed from Algorithms \ref{alg:rs} and \ref{alg:fd} can be expressed as products of powers of Reynolds number and roughness length scale---which is equivalent to expressing $\vz_1$'s and $\vz_2$'s from Table \ref{tab:turbulent_z} as linear combinations of the exponents defining Reynolds number and roughness length scale, i.e., the two leftmost columns in Table \ref{tab:turbulent_z}. Using the notation from Section \ref{ssec:unique}, the computed dimensionless groups from Algorithm \ref{alg:rs} (columns 3 and 4 in Table \ref{tab:turbulent_z}) can be expressed as
\begin{equation}
\hat{\pi}_1 = \pi_1^{0.304}\,\pi_2^{-0.429},\qquad
\hat{\pi}_2 = \pi_1^{0.440}\,\pi_2^{0.622},
\end{equation}
where $\pi_1$ is the Reynolds number and $\pi_2$ is the roughness scale---see columns 1 and 2 of Table \ref{tab:turbulent_z}, respectively. The exponents include three significant digits. The comparable exponents for $\hat{\pi}_1$ and $\hat{\pi}_2$ from Algorithm \ref{alg:fd} (columns 5 and 6 of Table \ref{tab:turbulent_z}) are 
\begin{equation}
\hat{\pi}_1 = \pi_1^{0.309}\,\pi_2^{-0.423},\qquad
\hat{\pi}_2 = \pi_1^{0.436}\,\pi_2^{0.627}.
\end{equation}
Recall that $\vz_1$ and $\vz_2$ depend on the chosen probability density $\rho$; the results from Table \ref{tab:turbulent_z} are not valid for all viscous pipe flows---only the flows consistent with the regime defined by the ranges in Table \ref{tab:bounds_turbulent}. 
%This dependence may be an advantage if the engineer desires focused results, or it may be a disadvantage if the scientist is seeking transcendent truths about physical reality. 
Therefore, for flows consistent with the ranges in Table \ref{tab:bounds_turbulent}, the most relevant dimensionless group is not a common dimensionless group. It is a different product of powers of the original independent variables $\rho$, $\mu$, $D$, $\varepsilon$, and $V$. And the second most relevant dimensionless group is some other product of powers. 

\begin{table}[h]
\centering
\caption{Computed exponents for the turbulent pipe flow defined by the ranges in Table \ref{tab:bounds_turbulent}}
\label{tab:turbulent_z}
\begin{tabular}{l|ll|ll|ll|}
 & \multicolumn{2}{|l|}{DA} & \multicolumn{2}{|l|}{Alg \ref{alg:rs}} & \multicolumn{2}{|l|}{Alg \ref{alg:fd}} \\
variable & $\Ren$ & $\varepsilon/D$ & $\vz_1$ & $\vz_2$ & $\vz_1$ & $\vz_2$  \\
\hline
$\rho$        &  1.000 &  0.000 &  0.304 &  0.440 &  0.309 &  0.436 \\ 
$\mu$         & -1.000 &  0.000 & -0.304 & -0.440 & -0.309 & -0.436 \\
$D$           &  1.000 & -1.000 &  0.734 & -0.183 &  0.732 & -0.190 \\
$\varepsilon$ &  0.000 &  1.000 & -0.429 &  0.622 & -0.423 &  0.627 \\
$V$           &  1.000 &  0.000 &  0.304 &  0.440 &  0.309 &  0.436 \\
\hline
Eigenvalue & --- & --- & 3.34e-4 & 2.09e-5 & 3.58e-4 & 1.70e-5  
\end{tabular}
\end{table}

Table \ref{tab:highre_z} shows $\vz_1$ and $\vz_2$ computed with Algorithms \ref{alg:rs} and \ref{alg:fd} for the high Reynolds flow case defined by the ranges in Table \ref{tab:bounds_highre}. The exponents of the classical dimensionless groups for pipe flow are in the first two columns for reference. For both algorithms, the first eigenvalue is several orders of magnitude larger than the second, which suggests that the first active variable is much more important than the second. Moreover, the components of $\vz_1$---for both algorithms---are a scalar multiple of the exponents for the roughness scale. In other words, the algorithms identify the roughness scale as the only important parameter in this flow regime. This is consistent with the fluid dynamics theory that, for high Reynolds number, the pressure loss depends only on the roughness scale and not on the Reynolds number. 

\begin{table}[h]
\centering
\caption{Computed exponents for the high Reynolds pipe flow defined by the ranges in Table \ref{tab:bounds_highre}}
\label{tab:highre_z}
\begin{tabular}{l|ll|ll|ll|}
 & \multicolumn{2}{|l|}{DA} & \multicolumn{2}{|l|}{Alg \ref{alg:rs}} & \multicolumn{2}{|l|}{Alg \ref{alg:fd}} \\
variable & $\Ren$ & $\varepsilon/D$ & $\vz_1$ & $\vz_2$ & $\vz_1$ & $\vz_2$  \\
\hline
$\rho$        &  1.000 &  0.000 &  0.000 & -0.535 &  0.000 & -0.535 \\ 
$\mu$         & -1.000 &  0.000 &  0.000 &  0.535 &  0.000 &  0.535 \\
$D$           &  1.000 & -1.000 &  0.707 & -0.267 &  0.707 & -0.267 \\
$\varepsilon$ &  0.000 &  1.000 & -0.707 & -0.267 & -0.707 & -0.267 \\
$V$           &  1.000 &  0.000 &  0.000 & -0.535 &  0.000 & -0.535 \\
\hline
Eigenvalue & --- & --- & 5.30e-3 & 1.31e-10 & 5.71e-3 & 6.97e-11  
\end{tabular}
\end{table}

The geometric interpretation of the eigenvectors in \eqref{eq:gas} and \eqref{eq:evecsfd} enables useful visualizations---closely related to \emph{summary plots} from regression graphics~\cite{cook2009regression}---for understanding this differences between the computed $\vz_1$, $\vz_2$ and the classical dimensional analysis exponents. Recall that $\vz_i$ defines the unique and relevant dimensionless group $\hat{\pi}_i$ in \eqref{eq:udim}. Figure \ref{fig:ssp} contains scatter plots of the dimensionless output $\log(\pi)$ from \eqref{eq:qoi} (the color axis) versus two sets of dimensionless groups: $(\log(\pi_1), \log(\pi_2))$ in Figure \ref{fig:ssp_da} and $(\log(\hat{\pi}_1), \log(\hat{\pi}_2))$ in Figure \ref{fig:ssp_alg2}. The first set in Figure \ref{fig:ssp_da} are from classical dimensional analysis; $\pi_1$ is the Reynolds number and $\pi_2$ is the dimensionless roughness. The dimensionless groups $(\log(\hat{\pi}_1), \log(\hat{\pi}_2))$ in Figure \ref{fig:ssp_alg2} represent a rotation---approximately 127 degrees---from the classical dimensionless groups; the value ``127 degrees'' comes from the arcosine of the (1,1) element of $\hmU$ from \eqref{eq:evecsfd} interpreted as a two-dimensional rotation matrix. This rotation aligns the important directions with the coordinate axes, where \emph{importance} is precisely defined by the sensitivity analysis in Section \ref{ssec:sensitivity}. 

The plots in Figure \ref{fig:ssp_da} can be interpreted as zooming into the black outlined section of the Moody Diagram contour plot in Figure \ref{fig:moody_cont}. Recall that the black outline is a result of the ranges in Table \ref{tab:bounds_turbulent}. Figure \ref{fig:ssp_alg2} rotates the zoomed-in surface so that its important directions are coordinate-aligned. The contours within each of the outlined regions in Figure \ref{fig:moody_cont} are consistent with the computed dimensionless groups. The red outlined region corresponds to Table \ref{tab:laminar_z}, where the algorithms identify Reynolds number as the only important dimensionless group. And the black outlined region corresponds to Table \ref{tab:highre_z}, where the algorithms identify the roughness scale as the only important dimensionless group. The helpful visualization is not possible when there are more than two dimensionless groups. However, the insight rich interpretation is still valid; the unique and relevant dimensionless groups represent rotating the domain of the dimensionless dependent variable as a function of the dimensionless groups such that the resulting coordinates are ordered by importance. 

\begin{figure}[ht]
\centering
\subfloat[Turbulent, DA]{
\label{fig:ssp_da}
\includegraphics[width=.43\textwidth]{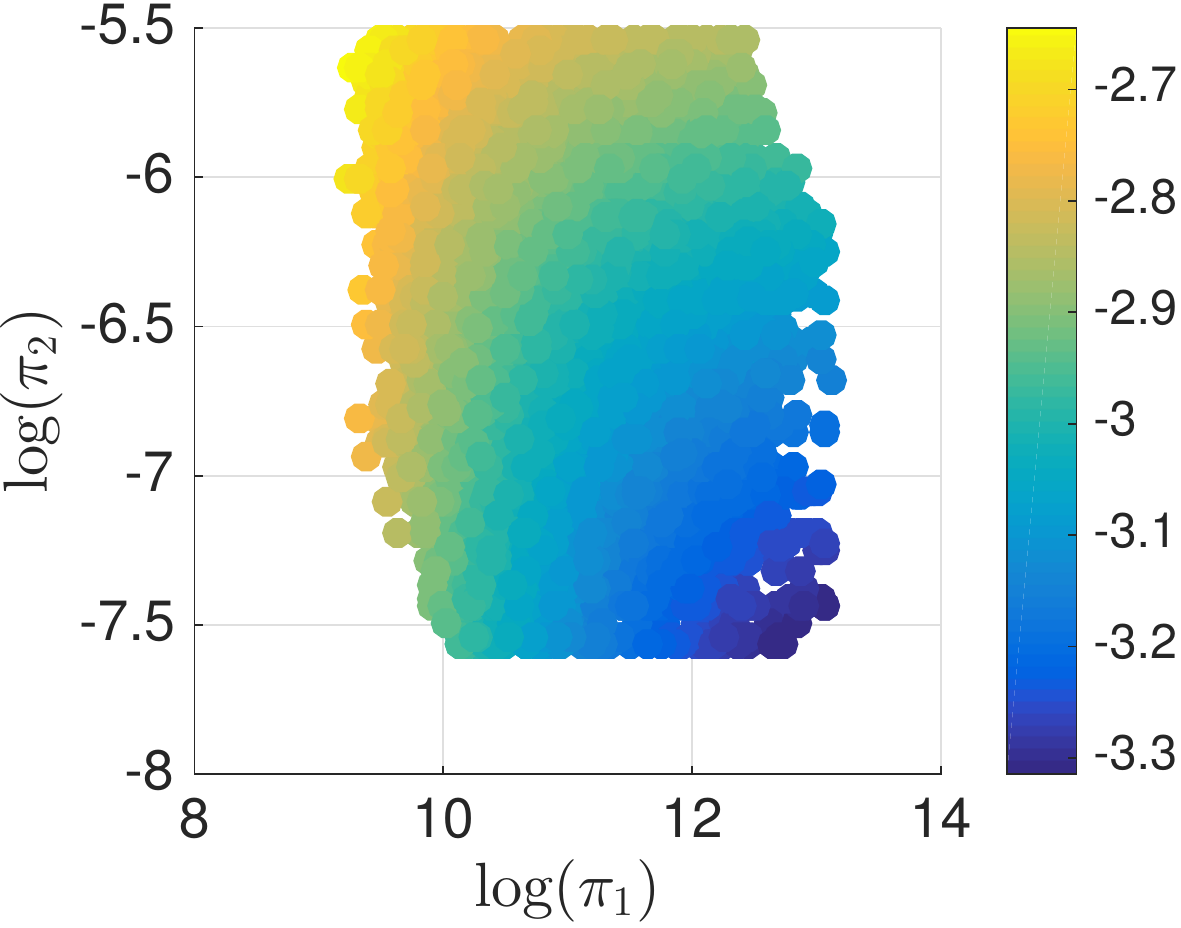}%
}
\subfloat[Turbulent, Alg \ref{alg:fd}]{
\label{fig:ssp_alg2}
\includegraphics[width=.43\textwidth]{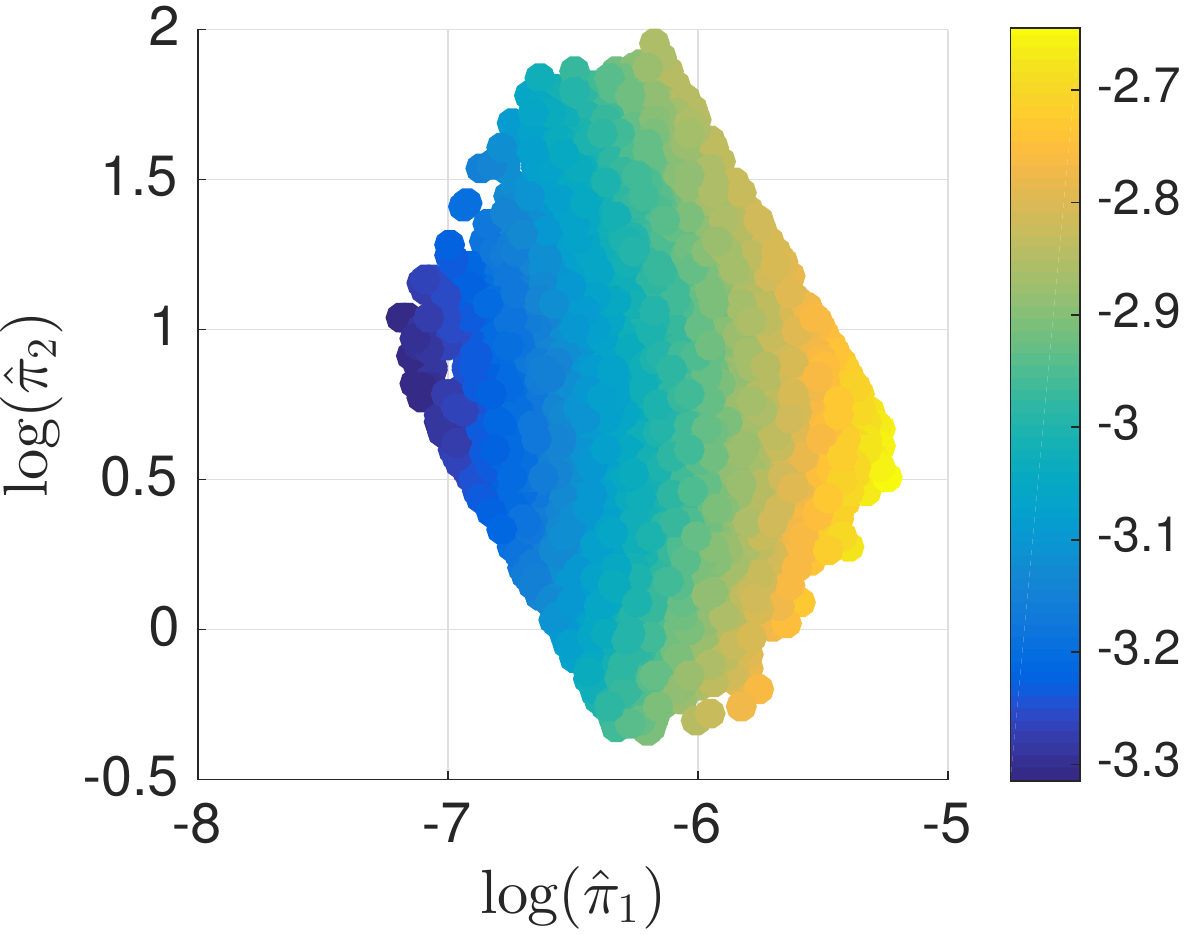}%
}
\caption{Scatter plots of the dimensionless dependent variable $\log(\pi)$ from \eqref{eq:qoi}, indicated by the color axis, versus two dimensionless groups---both for the turbulent flow case defined by Table \ref{tab:bounds_turbulent}. Figure \ref{fig:ssp_da} uses the classical dimensionless groups for the viscous pipe flow; $\pi_1$ is the Reynolds number and $\pi_2$ is the relative roughness. Figure \ref{fig:ssp_alg2} uses the dimensionless groups defined by the exponents $\vz_1$ and $\vz_2$ computed with Algorithm \ref{alg:fd} in Table \ref{tab:turbulent_z}. Figure \ref{fig:ssp_alg2} is roughly a 127-degree rotation of Figure \ref{fig:ssp_da}.}
\label{fig:ssp}
\end{figure}

We can compare cost and accuracy of the two algorithms from the numerical experiments in this section, which gives insight into whether one should use Algorithm \ref{alg:rs} or Algorithm \ref{alg:fd}. The results of Algorithm \ref{alg:rs} in Tables \ref{tab:laminar_z}, \ref{tab:turbulent_z}, and \ref{tab:highre_z} use 1000 runs of the computational model that mimics a pipe experiment to construct a response surface; in essence, the remainder of Algorithm \ref{alg:rs}, following response surface construction, is applying Algorithm \ref{alg:fd} to the constructed response surface. Although no more experiments (i.e., evaluations of the computer model) are needed, the computed exponents ($\vz_i$ from \eqref{eq:zhatrs}) depend on the response surface quality. In our numerical experiment, we did not study the effect of increasing the number of experiments used to fit the response surface; the convergence of the computed exponents with more training data will follow the convergence of the response surface---assuming the integration rule is sufficiently accurate for \eqref{eq:gas}. 
%The eigenvalues in Table \ref{tab:laminar_z} suggest that the response surface-based approach is getting 1-to-2 digits of accuracy; fluid dynamics theory dictates that the second eigenvalue should be zero, but the response surface-based approach computes the second eigenvalue to be roughly 0.5. This discrepancy is due to inaccuracy in the response surface; we can say this confidently because we studied the errors due to the finite difference step size and number of quadrature points. 

The results of Algorithm \ref{alg:fd} in Tables \ref{tab:laminar_z}, \ref{tab:turbulent_z}, and \ref{tab:highre_z} use 483153 runs of the computational pipe flow model. These runs are used to estimate the partial derivatives at each quadrature point; see \eqref{eq:evecsfd}. The increased cost produces greater accuracy since there is no error due to the response surface. 
%the computed $2.7\times 10^{-12}$ is much closer to the theoretical zero than the result from Algorithm \ref{alg:rs}. 

Broadly, Algorithm \ref{alg:rs} is appropriate with a given set of experiments or when one can only afford a handful of experiments. In this case, the experimenter should beware that the accuracy in the computed exponents depends on the accuracy of the response surface. However, if experiments are relatively cheap---as in high-throughput experiments---then Algorithm \ref{alg:fd} will yield higher accuracy in the computed exponents.

\section{Summary and conclusions}
\label{sec:conclusions}

\noindent
Classical dimensional analysis (i) identifies dimensionless groups that constitute scale-free physical relationships and (ii) enables small-scale experiments that inform large-scale behavior. However, the dimensionless groups are not unique; nor do classical techniques provide any measure of importance or relevance of the dimensionless groups to the system. By supplementing the dimensional analysis with \emph{data} from an idealized experimental or computational setup, and by incorporating novel dimension reduction techniques related to global sensitivity analysis, we derived algorithms for constructing unique dimensionless groups equipped with measures of relative importance. The connections between dimensional analysis and active subspaces also show that any empirical model admits insight-rich low-dimensional structure in the map from independent variables to dependent variable. The two proposed algorithms are appropriate in different settings: the response surface-based algorithm is appropriate when only a small set of experiments are available and a global response surface is justified, while the finite difference-based algorithm is best suited for high-throughput experiments with fine control over independent variables. Both algorithms are well suited for computer experiments. We demonstrate both algorithms on a classical viscous pipe flow experiment, where a computer model based on a semi-empirical model mimics an experimental setup. 

Future work will account for noise in the experiments by including smoothing regularization in the response surfaces and derivative approximations. We expect that any system that benefits from dimensional analysis will also benefit from the proposed algorithms to identify unique and relevant dimensionless groups.

\section*{Acknowledgments}

\noindent
This material is based on work supported by Department of Defense, Defense Advanced Research Project Agency's program Enabling Quantification of Uncertainty in Physical Systems. The first author's work is partially supported by by the U.S. Department of Energy Office of Science, Office of Advanced Scientific Computing Research, Applied Mathematics program under Award Number DE-SC-0011077. The second author's work is supported by the National Science Foundation Graduate Research Fellowship under Grant No. DGE-114747.

\section*{References}

\bibliography{pi-subspaces}

\end{document}